\title{The $D_{\pi}$-property on products of $\pi$-decomposable groups}
\author{L. S. Kazarin,
A. Mart\'{\i}nez-Pastor
 and M.~D. P\'{e}rez-Ramos}
\date{}
\newtheorem{lem}{Lemma}
\newtheorem{pro}{Proposition}
\newtheorem{teor}{Theorem}
\newtheorem*{mainth}{Main Theorem}
\theoremstyle{definition}
\newcommand{\Aut}{\text{Aut}}
\begin{document}
\maketitle

\begin{abstract}
The aim of this paper is to prove the following result:
Let $\pi$ be a set of odd primes. If the group $G=AB$ is the product of two
$\pi$-decomposable subgroups $A=A_\pi \times A_{\pi'}$ and $B=B_\pi \times B_{\pi'}$,
then $G$ has a unique conjugacy class of Hall $\pi$-subgroups, and any $\pi$-subgroup is contained in a Hall $\pi$-subgroup (i.e. $G$ satisfies property $D_{\pi}$).\\

Keywords: Finite groups, Product of subgroups, $\pi$-structure, Simple groups\\
MSC2010: 20D40, 20D20, 20E32
\end{abstract}

\section{Introduction}

All groups considered in this paper are assumed to be finite. A well-known result in the framework of finite factorized groups is the classical theorem by O. Kegel and H. Wielandt which asserts the solubility of a group which is
the product of two nilpotent subgroups. This theorem has been the motivation for a great number of results in the literature on factorized groups. Particularly some of them consider the situation when either one or both factors are $\pi$-decomposable, for a set of primes
$\pi$.  A group $X$ is said to be $\pi$-\emph{decomposable} for a set of primes $\pi$,
 if $X=X_\pi \times X_{\pi'}$ is the direct product of a $\pi$-subgroup  $X_\pi$
 and a $\pi'$-subgroup  $X_{\pi'}$, where $\pi'$ stands for the complement of $\pi$ in the set of all
prime numbers. For any group $X$ and any set of primes $\sigma$, we use $X_\sigma$  to denote a Hall $\sigma$-subgroup of $X$.

In this line, Y. G.  Berkovich \cite{Ber} proved that the Kegel and Wielandt's result remains true for a group $G=AB$ which is the product of subgroups $A$ and $B$ such that one of the factors, say $A$, is $2$-decomposable, the other factor $B$ is nilpotent of odd order, and $A$ and $B$ have coprime orders. If the subgroup $B$ is metanilpotent instead of nilpotent, but preserving all the remaining conditions,
P. J. Rowley \cite{Row} proved that the group $G$ is $\sigma$-separable, for  the set $\sigma$  of all odd primes dividing the order of $A$. Z. Arad and D. Chillag \cite{A-C} showed that this conclusion  remains true without any restriction on the nilpotent length of $B$. Previously L. S. Kazarin \cite{Ka1} had obtained under the same hypotheses that $O_{2'}(A) \leq O_{2'}(G)$.

A significant extension of these results was obtained in \cite{paper1} by proving that $O_\pi(A)\le O_\pi (G)$ whenever $G=AB$, $A$ is a $\pi$-decomposable subgroup of $G$ for any set of odd primes $\pi$, and $B$ is a $\pi$-subgroup of $G$; equivalently, $O_\pi(A)B$ is a Hall $\pi$-subgroup of $G$ (see \cite[Theorem 1, Lemma 1]{paper1}). Under the additional hypothesis that $A$ and $B$ have coprime orders, such as considered in the mentioned previous results, it is easily
derived the $\sigma$-separability of $G$ for the set $\sigma$  of all odd prime divisors of the order of $A$.

In fact the results in \cite{paper1} make up the starting point of a longtime development carried out in \cite{paper1,paper2,bath,paper3,paper4}, where the existence of Hall $\pi$-subgroups have been considered as a preliminary step to  finding conditions of $\pi$-separability for products of $\pi$-decomposable subgroups. The goal was to prove the following theorem, which was first stated as a conjecture in \cite{paper2}:
\begin{teor}\textup{(\cite[Main Theorem]{paper4})}\label{mainth}
Let $\pi$ be a set of odd primes. Let the group $G=AB$ be the product of two
$\pi$-decomposable subgroups $A=A_\pi \times A_{\pi'}$ and $B=B_\pi \times B_{\pi'}$. Then
$A_\pi B_\pi=B_\pi A_\pi$ and this is a Hall $\pi$-subgroup of $G$. In particular, $G$ satisfies the property $E_{\pi}$.
\end{teor}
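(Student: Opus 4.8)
The plan is to argue by induction on $|G|$, so suppose $G$ is a counterexample of least order. Write $A_\pi=O_\pi(A)$ and $B_\pi=O_\pi(B)$ for the (unique, normal) Hall $\pi$-subgroups of the factors; since $\pi$ consists of odd primes, $A_\pi$ and $B_\pi$ have odd order and are therefore soluble by the Feit--Thompson theorem, while the Sylow $2$-subgroups of $A$ and $B$ lie in $A_{\pi'}$ and $B_{\pi'}$. The first goal is to strip off the soluble normal part of $G$. For any $1\ne N\trianglelefteq G$ the images $AN/N$ and $BN/N$ are again $\pi$-decomposable and $G/N=(AN/N)(BN/N)$, so by induction $(A_\pi N/N)(B_\pi N/N)$ is a Hall $\pi$-subgroup of $G/N$. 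Combining this with the result of \cite{paper1} (which already settles the case in which one factor is a $\pi$-group) and with the Schur--Zassenhaus theorem to lift Hall subgroups through normal $\pi$- or $\pi'$-sections, I would reduce to the situation $O_\pi(G)=O_{\pi'}(G)=1$. Hence $F(G)=1$, so $F^*(G)=\mathrm{soc}(G)=S_1\times\cdots\times S_k$ is a direct product of non-abelian simple groups with $C_G(\mathrm{soc}(G))=1$; in particular $G$ embeds in $\Aut(\mathrm{soc}(G))$.

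The next step is to reduce to the almost simple case $k=1$. When $k\ge 2$ the conjugation action of $G$, and of the two factors, on the set $\{S_1,\dots,S_k\}$ must be analysed: one uses the factorization to show that the component structure is compatible with a product of the form $A_\pi B_\pi$, descending (via a wreath-product argument and the solubility of the $\pi$-parts) to the single-component analysis. So the heart of the matter is the case in which $S:=\mathrm{soc}(G)$ is simple and $S\le G\le\Aut(S)$, with $G=AB$. Intersecting the factorization with $S$ and tracking orders, the problem becomes one about factorizations of an almost simple group by subgroups whose normal $\pi$-parts are the odd-order soluble groups $A_\pi$ and $B_\pi$.

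At this point I would invoke the classification of finite simple groups, and specifically the Liebeck--Praeger--Saxl description of the maximal factorizations of the almost simple groups. Running through the families of simple groups, the constraints that $A_\pi$ and $B_\pi$ are soluble $\pi$-subgroups of odd order and that $2\in\pi'$ (so that the involutions are confined to the $\pi'$-parts) sharply limit the admissible factorizations and the possible $\pi$-structure of $G$. In each surviving configuration the aim is to exhibit a Hall $\pi$-subgroup $H$ of $G$ containing both $A_\pi$ and $B_\pi$; since a count of orders gives $|A_\pi B_\pi|=|A_\pi|\,|B_\pi|/|A_\pi\cap B_\pi|=|G|_\pi=|H|$, the inclusion $A_\pi B_\pi\subseteq H$ forces $A_\pi B_\pi=H=B_\pi A_\pi$, a Hall $\pi$-subgroup, contradicting the choice of $G$.

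The main obstacle is unquestionably this last, classification-dependent step: one must verify, uniformly across all almost simple groups admitting a factorization with $\pi$-decomposable factors, that the two odd-order $\pi$-parts permute and together fill out a full Hall $\pi$-subgroup. The oddness of $\pi$ is what makes this feasible --- Feit--Thompson renders $A_\pi,B_\pi$ soluble and pushes every involution into $A_{\pi'}$ and $B_{\pi'}$ --- and it is precisely the hypothesis that cannot be dropped. I expect the delicate cases to be the classical groups, which admit the richest supply of factorizations, together with a handful of exceptional and sporadic groups requiring individual treatment; organizing these cases (presumably the content of the earlier papers \cite{paper2,bath,paper3}) is where the real work lies, the inductive framework above being comparatively routine.
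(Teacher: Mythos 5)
A point of context first: the present paper does not prove this statement at all. It is Theorem~\ref{mainth}, quoted from the earlier article \cite{paper4}, and is used here only as the starting point ($E_\pi$, hence $C_\pi$ by Gross) for establishing the $D_\pi$ property. So there is no internal proof to compare against; what can be assessed is how your plan relates to the strategy actually carried out in \cite{paper3,paper4} and echoed in Proposition~\ref{mincount}.

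Your outline has the right global shape (minimal counterexample, reduction to an almost simple group, then CFSG plus the Liebeck--Praeger--Saxl maximal factorizations), but each step that carries real weight is deferred or only gestured at. (i) After factoring out $N=O_{\pi'}(G)$, Schur--Zassenhaus places $A_\pi$ and $B_\pi$ in complements that are a priori only \emph{conjugate}; you give no argument that they lie in a common one, and this is not a formality --- in the actual development it is absorbed into a separate reduction theorem (\cite{paper3}) whose upshot is that one may assume one factor is a $\pi'$-group, a reduction your sketch never makes although it is what renders the almost simple analysis tractable. (ii) The passage from $\mathrm{soc}(G)=S_1\times\cdots\times S_k$ with $k\ge2$ down to $k=1$ is asserted via an unspecified ``wreath-product argument''; the working mechanism (visible in Proposition~\ref{mincount}) is the factorizer $X(N)$ of a minimal normal subgroup, which by minimality forces $G=AN=BN$, places $A_\pi$ inside $N$, and then uses that $A_{\pi'}$ centralizes $A_\pi$ while permuting the simple components transitively to force $N$ simple. (iii) The classification-dependent case analysis --- verifying in every admissible factorization of an almost simple group that the odd $\pi$-parts permute and fill out a Hall $\pi$-subgroup --- is the entire content of the theorem; it occupies most of \cite{paper4} and runs through primitive prime divisors, maximal tori and centralizer computations of the kind recorded in Lemma~\ref{primitive}, none of which appears in your plan. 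Your closing order count is correct once a Hall $\pi$-subgroup containing both $A_\pi$ and $B_\pi$ has been exhibited, but exhibiting it is precisely the problem; as written the proposal is a roadmap rather than a proof.
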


This theorem, whose proof uses deeply   the classification of finite simple groups (CFSG),  substantially extends all the above mentioned results. In particular, we have achieved some non-simplicity and $\pi$-separability criteria for certain products of $\pi$-decom\-posable groups, which contribute some new extensions  of the theorem of Kegel and Wielandt (see also \cite{bath}). Some examples in \cite{paper1, paper2} show that the analogous result  to Theorem~\ref{mainth} does not hold in general if the set of primes $\pi$ contains the prime $2$, although some related positive results were obtained in this case in \cite{paper2} when the factors are both soluble.

%

It is well-known that, given a set of primes $\pi$, any soluble group or, more generally, any $\pi$-separable group, has a unique conjugacy class of Hall $\pi$-subgroups, and that every $\pi$-subgroup is contained in a Hall $\pi$-subgroup (dominance). So it is worthwhile emphasizing that the above development is closely related to a classic but ongoing problem in the theory of finite groups: the search for conditions which guarantee the  existence, conjugacy, and dominance of Hall subgroups in a finite group,  for a given set of primes $\pi$. To be more accurate, we will say that a group $G$ \emph{satisfies the property}:
\begin{itemize}\itemsep=0pt
\item[$E_{\pi}$]  if $G$ has at least one Hall $\pi$-subgroup;
\item[$C_{\pi}$] if $G$ satisfies $E_{\pi}$ and any two Hall $\pi$-subgroups of $G$ are conjugate in $G$;
\item[$D_{\pi}$] if $G$ satisfies $C_{\pi}$ and every $\pi$-subgroup of $G$ is contained in some Hall $\pi$-subgroup of $G$.
\end{itemize}
(Such a group is also called an \emph{$E_{\pi}$-group},  \emph{$C_{\pi}$-group}, and \emph{$D_{\pi}$-group}, respectively.)
After the seminal work of P. Hall (cf. \cite{Hall}), where he introduced the above terminology, numerous researchers have addressed the mentioned problem. Specially significant is a theorem of H. Wielandt \cite{Wie} which states that any group
possessing nilpotent Hall $\pi$-subgroups is a $D_{\pi}$-group (see Lemma \ref{Wie} below).  In particular, several authors have investigated in a series of papers the Hall subgroups of the finite simple groups. The classification of the simple groups satisfying the properties $E_{\pi}$, $C_{\pi}$, or $D_{\pi}$, has been completed by E. Vdovin and D. Revin. We refer to the expository article {\cite{VRsurvey} for a detailed account on this topic.

A natural question arises then for products of $\pi$-decomposable groups: What can be said about the properties $C_{\pi}$ and $D_{\pi}$ for such products of groups? For any set of odd primes, F. Gross proved in \cite{Gro2} that any group satisfying the $E_{\pi}$-property also satisfies the $C_{\pi}$ one. So, having in mind Theorem~\ref{mainth}, it remains to analyze the dominance property. This will be the principal aim of the present paper, which we will attain by proving  the following theorem:

\begin{mainth}\label{mainDpi} Let $\pi$ be a set of odd primes. Let the group $G=AB$ be the product of two
$\pi$-decomposable subgroups $A=A_\pi \times A_{\pi'}$ and $B=B_\pi \times B_{\pi'}$. Then $G$ satisfies the property $D_{\pi}$.
\end{mainth}

 In fact, our study on the $D_{\pi}$-property  was initially motivated by the development carried out in \cite{trif} on trifactorized groups and $\pi$-decomposability, where the dominance property appeared as a relevant tool.  \emph{Trifactorized} groups, that is, groups of the form $G = AB = AC= BC$, where $A$, $B$, and $C$ are subgroups of $G$,  play a key role within the study of factorized groups. For instance, the so-called  \emph{factorizer} of a normal subgroup in a factorized group turns out to be trifactorized (see \cite{AFG}). Specifically, for a subgroup $N$ of a group $G=AB$ which is the product of subgroups $A$ and $B$, the \emph{factorizer} of $N$ in $G$, denoted $X(N)$, is the intersection of all factorized subgroups of $G$ containing $N$. (A subgroup $S$ of $G=AB$ is \emph{factorized} if $S=(S\cap A)(S\cap B)$ and $A\cap B\le S$.)
In this setting, in \cite{trif} we proved the following result, which can be considered as a particular significant case of our Main Theorem:

\begin{teor}\textup{(\cite[Theorem 3.2]{trif})} \label{dpitri}
Let $\pi$ be a set of odd primes. Let the group $G=AB=AC=BC$ be the product of three
subgroups $A, B$ and $C$, where $A=A_\pi \times A_{\pi'}$ and $B=B_\pi \times B_{\pi'}$ are $\pi$-decomposable groups, and $C$ is a  $D_{\pi}$-group. Then $G$ is a  $D_{\pi}$-group.
\end{teor}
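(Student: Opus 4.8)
The plan is to first reduce the statement to the dominance property alone, and then to run an induction on $|G|$ that isolates a single non-$D_\pi$ simple section and rules it out by the classification.

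First I would record that $G$ already satisfies $C_\pi$. Since $G=AB$ is the product of the two $\pi$-decomposable subgroups $A$ and $B$ with $\pi$ a set of odd primes, Theorem~\ref{mainth} gives that $A_\pi B_\pi$ is a Hall $\pi$-subgroup of $G$, so $G$ is an $E_\pi$-group; as $\pi$ consists of odd primes, the theorem of Gross \cite{Gro2} upgrades this to $C_\pi$. Hence only dominance remains. All three hypotheses pass to quotients: for $1\neq N\trianglelefteq G$ the images $\bar A=AN/N$ and $\bar B=BN/N$ are again $\pi$-decomposable (an epimorphic image of a $\pi$-decomposable group is $\pi$-decomposable), the image $\bar C=CN/N$ of the $D_\pi$-group $C$ is again a $D_\pi$-group (dominance is preserved under quotients), and $G/N=\bar A\bar B=\bar A\bar C=\bar B\bar C$ is again trifactorized. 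Arguing by induction on $|G|$, I would therefore take $G$ to be a counterexample of minimal order, so that $G$ is $C_\pi$ but not $D_\pi$, while every proper quotient $G/N$ is a $D_\pi$-group.

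The next step uses the extension closure of the class $D_\pi$: if $N\trianglelefteq G$ with $N$ and $G/N$ both $D_\pi$-groups then $G$ is a $D_\pi$-group, a CFSG-based fact which, together with the inheritance of $D_\pi$ by normal subgroups and quotients, belongs to the Revin--Vdovin theory surveyed in \cite{VRsurvey}. Combined with the inductive hypothesis, this forces no nontrivial proper normal subgroup of $G$ to be a $D_\pi$-group. Taking a minimal normal subgroup $N=S_1\times\cdots\times S_k$, the abelian case is excluded (an elementary abelian $p$-group is trivially a $D_\pi$-group), so the $S_i$ are isomorphic to a non-abelian simple group $S$ that is not a $D_\pi$-group. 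If $C_G(N)\neq 1$, then $C_G(N)$ is a proper nontrivial normal subgroup, $G/C_G(N)$ is a $D_\pi$-group by induction, and $N\cong NC_G(N)/C_G(N)$ is a normal subgroup of it, hence a $D_\pi$-group --- a contradiction; so $C_G(N)=1$, $N=\operatorname{soc}(G)$ is the unique minimal normal subgroup, and $G$ embeds in $\Aut(S)\wr S_k$.

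The heart of the argument, and the main obstacle, is to contradict the existence of this non-$D_\pi$ simple composition factor $S$ by means of the triple factorization. Here I would transfer the factorization down to the socle through the theory of factorizers --- the factorizer of $N$ in $G=AB$ is itself trifactorized (see \cite{AFG}) --- reducing to a factorization of the component group $S$ (equivalently, of $G$ modulo the stabilizer of a single component) as a product $S=\bar A\bar B=\bar A\bar C=\bar B\bar C$ in which $\bar A,\bar B$ retain a $\pi$-decomposable structure and $\bar C$ is a $D_\pi$-group. Applying Theorem~\ref{mainth} to $S=\bar A\bar B$ shows that $S$ is an $E_\pi$-group, hence a $C_\pi$-group by \cite{Gro2}; the remaining and genuinely CFSG-dependent input is to verify, against the Revin--Vdovin classification of the simple $C_\pi$- and $D_\pi$-groups \cite{VRsurvey}, that a non-abelian simple group admitting such a triple factorization --- in particular one of the form $S=\bar A\bar C$ with $\bar A$ $\pi$-decomposable and $\bar C$ a $D_\pi$-group --- is necessarily a $D_\pi$-group. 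This contradicts the choice of $S$ and completes the proof. I expect the delicate points to be the control of the wreath-product embedding when $k>1$ (ensuring that the factorizations induced on one component keep the $\pi$-decomposable and $D_\pi$ structure) and the case analysis of the simple groups, which is precisely where the classification is indispensable.
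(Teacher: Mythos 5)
Your proposal has the right skeleton but two genuine gaps, and it also misses the cheap route available inside this paper. Note first that the paper does not reprove Theorem~2 here: it is quoted from \cite{trif}, and within the present paper it is an immediate corollary of the Main Theorem (whose proof nowhere uses Theorem~2) --- once $G=AB$ with $A$, $B$ $\pi$-decomposable forces $D_\pi$, the hypothesis on $C$ is redundant. Your opening moves do match the paper's minimal-counterexample machinery (Proposition~1): $C_\pi$ via Theorem~1 plus Gross, quotient closure, Lemma~4 (\cite[Theorem 7.7]{RV}) to kill every nontrivial normal subgroup's $D_\pi$-property, and $C_G(N)=1$. But from there your argument has holes. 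First, the reduction to a \emph{simple} socle is asserted, not proved. A factorization $G=AB$ does not induce a factorization of a single component $S_i$ of $N=S_1\times\cdots\times S_k$, and ``control of the wreath-product embedding,'' which you yourself flag as delicate, is exactly what is missing. The paper's route is different and specific: it first reduces to the case where one factor, say $B$, is a $\pi'$-group (via the normal closures $A_\pi^G$, $B_\pi^G$, Lemma~1, and the $D_\pi$-property of $G/A_\pi^G$ and $G/B_\pi^G$), then shows the factorizer $X(N)$ must equal $G$, deduces $A_\pi\le N$ by order arguments, and obtains simplicity of $N$ because $A_{\pi'}$ permutes the components transitively while centralizing $A_\pi$. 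You never perform the reduction to $B_\pi=1$, and without it you do not even get $A_\pi\le N$ or that $A_\pi$ is a Hall $\pi$-subgroup of $N$, which is what makes the subsequent simple-group analysis workable.

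Second, your endgame --- ``verify against the Revin--Vdovin classification that a non-abelian simple group admitting such a triple factorization is necessarily a $D_\pi$-group'' --- is not a citable fact; it is the entire content of the hard part. There exist simple $C_\pi$-groups for odd $\pi$ that are not $D_\pi$-groups, so knowing $S$ is $C_\pi$ (via Theorem~1 and \cite{Gro2}) settles nothing; the factorization must be exploited family by family. In this paper that occupies Lemmas~6--12: sporadic and alternating socles are excluded by \cite{VRsurvey} and \cite{LPS}, and for Lie-type socles one combines the LPS classification of maximal factorizations with primitive-prime-divisor and maximal-torus centralizer arguments (Lemma~5, Table~1), solubility bounds of Amberg--Carocca--Kazarin, and the structural constraints of Proposition~1(5)--(9). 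Deferring this case analysis means your proposal is a plausible strategy outline, not a proof: the two steps you leave open (descent to a genuinely simple, singly-factorized socle, and the CFSG case-by-case elimination) are precisely where all the work lies.
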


The notation is standard and is taken mainly from   \cite{D-H}. We also refer to this book  for the basic terminology and results about classes of groups.   If $X, Y$ are subgroups of a group $G$, we use the notation $X^{Y}=\langle x^y\mid x\in X, y\in Y\rangle$; in particular, $X^{G}$ is the normal closure of $X$ in $G$. Also, if $n$ is an integer and $p$ a prime number,  $n_p$ will denote the largest power of $p$ dividing $n$, and  $\pi(n)$ the set of prime divisors of $n$; for the order $|G|$ of a group $G$, we set $\pi(G)=\pi(|G|)$.

\section{Preliminary results}

The next result is a reformulation of a useful one due to Kegel, and later on improved by Wielandt, which appears in \cite[Lemma 2.5.1]{AFG} (see also \cite[Lemma 2]{paper2}).
\begin{lem}\label{2.5.1}
Let the group $G=AB$ be the product of the subgroups $A$ and $B$ and let $A_0$ and $B_0$ be normal subgroups of $A$ and $B$, respectively. If $A_0B_0=B_0A_0$, then $A_0^gB_0=B_0A_0^g$ for all $g\in G$.

Moreover, if $A_0$ and $B_0$ are $\pi$-groups for a set of primes $\pi$, and $O_\pi(G)=1$, then $[A_0^G,B_0^G]=1$.
\end{lem}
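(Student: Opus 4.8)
The two assertions call for quite different treatments, and I would separate them. For the first one, fix $g \in G$ and, using $G = AB$, write $g = ab$ with $a \in A$, $b \in B$. Since $A_0 \trianglelefteq A$ we have $A_0^a = A_0$, so $A_0^g = A_0^b$; and since $B_0 \trianglelefteq B$ we have $B_0^b = B_0$. Therefore $(A_0 B_0)^b = A_0^b B_0^b = A_0^g B_0$. By hypothesis $A_0 B_0 = B_0 A_0$ is a subgroup, hence so is its conjugate $(A_0 B_0)^b$; thus $A_0^g B_0 = B_0 A_0^g$, as wanted. I anticipate no difficulty here.

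For the \emph{moreover} part, I would first strengthen the first assertion to all pairs of conjugates: conjugating $A_0^{xy^{-1}} B_0 = B_0 A_0^{xy^{-1}}$ by $y$ yields $A_0^x B_0^y = B_0^y A_0^x$ for all $x, y \in G$, and each such product is a $\pi$-subgroup, being a group equal to the product of the two $\pi$-subgroups $A_0^x$ and $B_0^y$. Now set $V = [A_0^G, B_0^G] \trianglelefteq G$. Because the generating sets $\bigcup_x A_0^x$ and $\bigcup_y B_0^y$ of $A_0^G$ and $B_0^G$ are invariant under conjugation by $G$, the standard commutator-generation lemma gives $V = \langle [A_0^x, B_0^y] : x, y \in G \rangle$, and every generator $[A_0^x, B_0^y]$ lies in the $\pi$-subgroup $A_0^x B_0^y$. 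Thus $V$ is a normal subgroup of $G$ generated by $\pi$-subgroups, and it suffices to prove that $V$ is itself a $\pi$-group, for then $V \le O_\pi(G) = 1$. Equivalently, one may reduce to showing $[A_0, B_0] = 1$: from $A_0^a = A_0$ one gets $[A_0, B_0^a] = [A_0, B_0]^a$ for $a \in A$, so the vanishing of $[A_0, B_0]$ forces $A_0$ to centralize $B_0^A = B_0^G$, whence $A_0^G \le C_G(B_0^G)$.

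The crux, and the step I expect to absorb essentially all the work, is the passage from ``$V$ is generated by $\pi$-subgroups'' to ``$V$ is a $\pi$-group''. This implication is false without further hypotheses, so the factorization $G = AB$ and the global permutability of all conjugates must both be used, and the assumption $O_\pi(G) = 1$ is indispensable. I would argue by induction on $|G|$: assuming $V \ne 1$, I would analyse $L = O_{\pi'}(V)$, which is characteristic in $V$ and hence normal in $G$, and pass to $G/L$ in order to invoke the inductive hypothesis on the images of $A_0$ and $B_0$. The delicate point is that $O_\pi(G) = 1$ is not inherited by the quotient $G/L$, so controlling the $\pi'$-structure of $V$ and deriving the final contradiction with $O_\pi(G) = 1$ is exactly the content of the Kegel--Wielandt theorem being reformulated here; this is where the hard work lies.
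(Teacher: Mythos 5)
The paper does not actually prove this lemma: it is quoted from \cite[Lemma 2.5.1]{AFG} (with a pointer to \cite[Lemma 2]{paper2}), so there is no internal argument to compare yours with. Your proof of the first assertion is correct and is the standard one: writing $g=ab$ with $a\in A$, $b\in B$ reduces $A_0^g$ to $A_0^b$, and conjugating the subgroup $A_0B_0$ by $b$ exhibits $A_0^gB_0$ as a subgroup, hence $A_0^gB_0=B_0A_0^g$.

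For the \emph{moreover} part there is a genuine gap, and you say so yourself. The preparatory steps are fine: $A_0^xB_0^y=B_0^yA_0^x$ is a $\pi$-subgroup for all $x,y\in G$; $V=[A_0^G,B_0^G]$ is normal in $G$ and generated by the $\pi$-subgroups $[A_0^x,B_0^y]\le A_0^xB_0^y$; and it suffices to prove that $V$ is a $\pi$-group, or equivalently that $[A_0,B_0]=1$. But the passage from ``normal subgroup generated by pairwise-permutable $\pi$-subgroups'' to ``$\pi$-group'' is precisely the content of the lemma, and your proposal stops at a declaration of intent: an induction on $|G|$ through $L=O_{\pi'}(V)$ whose fatal obstacle --- that $O_\pi(G)=1$ is not inherited by $G/L$ --- you name without resolving. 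So the second half of the statement remains unproved. (A side remark: the missing step is not ``the Kegel--Wielandt theorem being reformulated''; this lemma is one of the tools Kegel and Wielandt used on the way to their solubility theorem, and its proof does not presuppose that theorem. The classical argument exploits the factorization more finely than you do, e.g.\ via $A_0^G=A_0^B$, $B_0^G=B_0^A$ and the containment $[A_0^G,B_0^G]\le A_0^G\cap B_0^G$, both closures being normal in $G$.) To make your write-up complete you would need either to reproduce the argument of \cite[Lemma 2.5.1]{AFG} or simply to cite it, as the paper does.
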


 We will use,  without further reference, the following fact on Hall subgroups of factorized groups, which is applicable to $\pi$-separable groups (see \cite[Lemma 1.3.2]{AFG}).

\begin{lem}\label{1.3.2}
Let $G=AB$ be the product of the subgroups $A$ and $B$. Assume that $A$ and  $B$ have Hall $\pi$-subgroups and that $G$ is a D$_{\pi}$-group for a set  of primes $\pi$. Then there exist  Hall $\pi$-subgroups $A_{\pi}$ of $A$ and $B_{\pi}$ of $B$ such that $A_{\pi}B_{\pi}$ is a Hall $\pi$-subgroup of $G$.
\end{lem}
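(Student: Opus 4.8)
The plan is to exploit the full strength of the $D_\pi$-hypothesis on $G$ together with the factorization $G=AB$, reducing everything to a clean order count. Since $G$ is a $D_\pi$-group it is in particular an $E_\pi$- and a $C_\pi$-group, so the set $\Omega$ of all Hall $\pi$-subgroups of $G$ is a single conjugacy class, and $G$ acts transitively on $\Omega$ by conjugation. The key objects I would introduce are $\Omega_A=\{H\in\Omega:A\cap H\text{ is a Hall }\pi\text{-subgroup of }A\}$ and the analogous $\Omega_B$. The goal is to locate a single $H\in\Omega_A\cap\Omega_B$: then $A_\pi:=A\cap H$ and $B_\pi:=B\cap H$ are Hall $\pi$-subgroups of $A$ and $B$ both lying inside $H$, and a cardinality argument will force $A_\pi B_\pi=H$.

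First I would check that $\Omega_A$ is nonempty and $A$-invariant. Nonemptiness is exactly where the two hypotheses enter: $A$ has a Hall $\pi$-subgroup $A_\pi^{0}$, which is a $\pi$-subgroup of $G$, so by the dominance part of the $D_\pi$-property it lies in some $H\in\Omega$; since $A\cap H$ is then a $\pi$-subgroup of $A$ containing the Hall $\pi$-subgroup $A_\pi^{0}$, it equals it, so $H\in\Omega_A$. Invariance is the identity $A\cap H^{a}=(A\cap H)^{a}$ for $a\in A$, whose right-hand side is again a Hall $\pi$-subgroup of $A$. The same holds for $\Omega_B$ with $B$. The crucial (and pleasantly short) step is then to intersect them using $G=AB$: choosing $H_0\in\Omega_A$ and $H_1\in\Omega_B$ and writing the conjugating element as $g=ab$ with $H_1=H_0^{g}$, $a\in A$, $b\in B$, the element $H_0^{a}$ lies in $\Omega_A$ by $A$-invariance and equals $H_1^{b^{-1}}\in\Omega_B$ by $B$-invariance, so $H:=H_0^{a}\in\Omega_A\cap\Omega_B$.

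Finally I would run the order count. With $A_\pi=A\cap H$ and $B_\pi=B\cap H$ both contained in $H$ we get $A_\pi B_\pi\subseteq H$, so $|A_\pi B_\pi|\le|H|=|G|_\pi$. On the other hand $A_\pi\cap B_\pi\le A\cap B$ is a $\pi$-group, so $|A_\pi\cap B_\pi|\le|A\cap B|_\pi$ and therefore $|A_\pi B_\pi|=|A|_\pi|B|_\pi/|A_\pi\cap B_\pi|\ge|A|_\pi|B|_\pi/|A\cap B|_\pi=|G|_\pi$, the last equality coming from $|G|=|A||B|/|A\cap B|$ by taking $\pi$-parts. Hence $|A_\pi B_\pi|=|H|$ and, being a subset of $H$ of the same cardinality, $A_\pi B_\pi=H$ is a Hall $\pi$-subgroup of $G$, as required. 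I expect the only genuinely delicate point to be the simultaneity of the two conditions --- finding one $H$ good for both $A$ and $B$ --- and the remark above is precisely what makes it work: conjugacy alone would only match a Hall subgroup of $A$ with a conjugate of $A$, whereas splitting the conjugating element through the factorization $G=AB$ keeps both $A$ and $B$ fixed.
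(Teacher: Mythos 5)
Your proof is correct and follows essentially the same route as the source the paper cites for this lemma (\cite[Lemma 1.3.2]{AFG}): dominance embeds chosen Hall $\pi$-subgroups of $A$ and $B$ into Hall $\pi$-subgroups of $G$, conjugacy combined with splitting the conjugating element as $g=ab$ through the factorization aligns both into a single Hall $\pi$-subgroup $H$, and the order count $|G|_\pi=|A|_\pi|B|_\pi/|A\cap B|_\pi$ forces $(A\cap H)(B\cap H)=H$. The paper itself quotes the result without proof, and your argument is the standard one, merely rephrased via the sets $\Omega_A$, $\Omega_B$ instead of tracking two fixed Hall subgroups.
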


The next well known result due to Wielandt will be relevant in our development.

\begin{lem}\textup{(\cite{Wie})}\label{Wie}
If a group $G$ has a nilpotent Hall $\pi$-subgroup, for a set of primes $\pi$, then $G$ is a $D_{\pi}$-group.
\end{lem}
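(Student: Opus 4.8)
The plan is to deduce the full $D_{\pi}$-property from the single assertion that every $\pi$-subgroup $K$ of $G$ is contained in some conjugate of the given nilpotent Hall $\pi$-subgroup $H$. Indeed, $E_{\pi}$ holds by hypothesis; taking $K$ to be a second Hall $\pi$-subgroup, the containment $K\le H^g$ together with $|K|=|H^g|$ forces $K=H^g$, which yields $C_{\pi}$; and the assertion for arbitrary $\pi$-subgroups $K$ is exactly dominance. The basic tool is the elementary remark that a $\pi$-subgroup $K$ normalizing a Hall $\pi$-subgroup $S$ must satisfy $K\le S$: for then $SK$ is a $\pi$-subgroup with $S\trianglelefteq SK$, whence $SK=S$ by maximality of the Hall subgroup. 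Thus it will suffice to manoeuvre $K$ into the normalizer of a suitable conjugate of $H$.

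I would argue by induction on $|G|$, fixing a minimal normal subgroup $M$ and using repeatedly that, since $H$ is a Hall $\pi$-subgroup and $M\trianglelefteq G$, both $H\cap M$ and $HM/M$ are Hall $\pi$-subgroups of $M$ and of $G/M$ respectively, and both are nilpotent (being a subgroup, respectively a quotient, of the nilpotent group $H$); so the inductive hypothesis is available for every proper section. If $M$ is a $\pi$-group then $M\le O_{\pi}(G)\le H$, and passing to $G/M$ and pulling back along $M\le H^g$ settles this case at once. If $M$ is a $\pi'$-group, induction in $G/M$ gives (after replacing $H$ by a conjugate) $K\le HM$; if $HM<G$ induction in $HM$ finishes, while if $HM=G$ I would set $L=KM$ and note, via Dedekind's law, that $H\cap L$ is a complement to $M$ in $L$ contained in $H$, hence a nilpotent Hall $\pi$-subgroup of $L$. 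If $L<G$ induction in $L$ places $K$ inside a conjugate of $H\cap L$; and if $L=G$ then $G=HM=KM$ is a semidirect product of the normal $\pi'$-subgroup $M$ with the two complements $H$ and $K$, so conjugacy of $K$ to $H$ follows from the conjugacy part of the Schur--Zassenhaus theorem, applicable precisely because $G/M\cong H$ is nilpotent, hence soluble.

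This reduces everything to the case in which no minimal normal subgroup is a $\pi$- or a $\pi'$-group, that is $O_{\pi}(G)=O_{\pi'}(G)=1$ and so $F(G)=1$; then $\mathrm{Soc}(G)$ is a direct product of nonabelian simple groups on which $G$ acts faithfully, and $H\cap\mathrm{Soc}(G)$ is a nilpotent Hall $\pi$-subgroup of that product. This ``almost simple'' configuration is where I expect the main obstacle to lie, since induction is no longer available. Here I would exploit nilpotency more forcefully: letting $\Omega$ be the set of $G$-conjugates of $H$, the index $[G:N_G(H)]$ divides $[G:H]$ and is therefore a $\pi'$-number, so for each prime $p\in\pi$ and each $P\in\mathrm{Syl}_p(K)$ the fixed-point congruence for the action of the $p$-group $P$ on $\Omega$ (the number of fixed points being congruent to $|\Omega|\not\equiv 0$ modulo $p$) forces a fixed conjugate $H^g$, giving $P\le N_G(H^g)$ and hence $P\le H^g$ by the remark above. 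The delicate point, and the genuine heart of Wielandt's argument, is to upgrade this Sylow-by-Sylow containment to a single conjugate of $H$ containing all of $K$; I would attempt this through a careful analysis of the normalizers $N_G(P)$ of the (normal, since $H$ is nilpotent) Sylow subgroups $P$ of $H$, combined with a Frattini argument, the nilpotency of $H$ being exactly what keeps these normalizers large enough to control $K$ globally. Once this case is settled, the inductive reductions above assemble the general statement.
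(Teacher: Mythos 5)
Your inductive reductions are correct as far as they go (the $\pi$- and $\pi'$-minimal-normal-subgroup cases, the Dedekind manipulation, and the appeal to Schur--Zassenhaus conjugacy, legitimate since $G/M\cong H$ is soluble), but they only shift the problem: the proof stops exactly where the theorem begins. In the residual case $O_\pi(G)=O_{\pi'}(G)=1$ you establish only that each Sylow $p$-subgroup of $K$ lies in some conjugate of $H$ --- and that containment is immediate from Sylow's theorem alone, since a Sylow $p$-subgroup of the Hall subgroup $H$ is already a Sylow $p$-subgroup of $G$; your fixed-point congruence on the set of conjugates of $H$ is valid but adds nothing, and in particular nowhere uses nilpotency. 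That this cannot suffice is shown concretely by $G=L_2(7)$ with $\pi=\{2,3\}$: there are two conjugacy classes of Hall $\pi$-subgroups (isomorphic to $S_4$), every Sylow $2$- and Sylow $3$-subgroup of a member $K$ of one class lies in a conjugate of a member $H$ of the other, yet $K$ lies in no conjugate of $H$ and neither $C_\pi$ nor dominance holds. So the step you defer --- ``a careful analysis of the normalizers $N_G(P)$ combined with a Frattini argument'' --- is not a routine completion but the entire content of Wielandt's theorem, and you give no argument for it. As it stands, the proposal is a sound reduction plus a declaration of intent: a genuine gap.

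For comparison: the paper does not prove this lemma at all but quotes it from Wielandt \cite{Wie}, and the classical argument is organized quite differently from your reduction to $F(G)=1$. One inducts on the order of the $\pi$-subgroup $K$ itself. In a minimal configuration every proper subgroup of $K$ lies in a conjugate of $H$ and is therefore nilpotent, so $K$ is soluble (by Schmidt's theorem on minimal non-nilpotent groups, or trivially if $K$ is nilpotent); choose $N\trianglelefteq K$ of prime index $q\in\pi$ and conjugate so that $N\le H$. The crux --- and the place where nilpotency of $H$, via the decomposition $H=H_q\times H_{q'}$, is genuinely exploited --- is a Sylow-type counting lemma: the number of conjugates of $H$ containing $N$ is not divisible by $q$. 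Since $N$ stabilizes each such conjugate, the $q$-group $K/N$ acts on this set and must fix some $H^g$, whence $K\le N_G(H^g)$ and then $K\le H^g$ by exactly your normalizer remark. There is no passage to an almost simple configuration and no Frattini argument on Sylow normalizers; your instinct that the socle case is ``where the main obstacle lies'' points toward machinery that this 1954 theorem neither needs nor plausibly yields to along the route you sketch, whereas the counting lemma above is the missing idea you would need to supply.
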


We need specifically the following result, whose proof uses CFSG.

\begin{lem}\textup{(\cite[Theorem 7.7]{RV})}\label{RV} Let $G$ be a group, $A$ a normal subgroup of $G$, and $\pi$ be a set of primes. Then $G$ is a D$_{\pi}$-group if and only if $A$ and $G/A$ are D$_{\pi}$-groups.
\end{lem}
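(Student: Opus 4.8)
The plan is to prove the two implications separately: the forward one (inheritance by normal subgroups and quotients) by classical transfer arguments, and the reverse one (closure under extensions) by induction on $|G|$, which is where CFSG enters. For the forward direction assume $G$ is a $D_\pi$-group with Hall $\pi$-subgroup $H$. A direct index count shows that $H\cap A$ is a Hall $\pi$-subgroup of $A$, so $A$ is $E_\pi$; since $A\trianglelefteq G$, every $\pi$-subgroup of $A$ lies in some $H^g\cap A=(H\cap A)^g$, which gives dominance, and conjugacy in $A$ follows from the transitive $G$-action on the Hall $\pi$-subgroups of $A$ via a Frattini argument applied to $N_G(H\cap A)$. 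Hence $A$ is a $D_\pi$-group.

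For the quotient $G/A$ I would first record the elementary fact that the index $|X:N_X(P)|$ of the normalizer of a Hall $\pi$-subgroup $P$ in any $E_\pi$-group $X$ is a $\pi'$-number, because $P$ is the unique Hall $\pi$-subgroup of $N_X(P)$, so that $|N_X(P)|_\pi=|P|=|X|_\pi$. Using this together with the Frattini argument and the Schur--Zassenhaus theorem applied to the normal $D_\pi$-subgroup $A$, one checks that every subgroup $K$ with $A\le K\le G$ and $K/A$ a $\pi$-group is $E_\pi$; consequently each Hall $\pi$-subgroup of $G/A$ is the image of one of $G$, and the $C_\pi$- and $D_\pi$-properties descend from $G$ to $G/A$. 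This completes the (CFSG-free) forward direction.

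For the reverse direction assume $A$ and $G/A$ are $D_\pi$-groups and argue by induction on $|G|$, with $1\neq A\neq G$. Choosing a minimal normal subgroup $N$ of $G$ inside $A$, the forward direction gives $N,A/N\in D_\pi$, while $(G/N)/(A/N)\cong G/A\in D_\pi$; the induction hypothesis applied to $A/N\trianglelefteq G/N$ then yields $G/N\in D_\pi$. It therefore suffices to prove the extension statement when $A=N$ is minimal normal, so $N=S_1\times\cdots\times S_k$ with the $S_i$ copies of a simple group $S$, and one must deduce the $D_\pi$-property of $G$ from that of $N$ and $G/N$. If $S$ is abelian then $N$ is an elementary abelian $p$-group, and this passage is the classical Chunikhin-type result on extensions by a normal $\pi$-subgroup (a bijection of the Hall $\pi$-subgroups of $G$ with those of $G/N$) or by a normal $\pi'$-subgroup (Schur--Zassenhaus together with conjugacy of complements).

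The decisive case is $S$ nonabelian simple. Here $N\in D_\pi$ forces $S\in D_\pi$, and I would use the conjugation embedding $G/C_G(N)\hookrightarrow \Aut(S)\wr \mathrm{Sym}(k)$ to reduce the control of Hall $\pi$-subgroups of $G$ to the almost simple situation on a single factor $S$. The essential, CFSG-dependent input is twofold: the Revin--Vdovin classification of the finite simple $D_\pi$-groups, and the theorem that for such an $S$ every group $X$ with $S\trianglelefteq X\le \Aut(S)$ is again a $D_\pi$-group. Granting these, conjugacy and dominance of the Hall $\pi$-subgroups of $G$ are obtained by combining the almost simple case componentwise with the $D_\pi$-property of $G/N$ and lifting across $N$. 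I expect the verification of this almost simple extension property---resting on the detailed structure of Hall subgroups in the groups of Lie type and the behaviour of their field and graph automorphisms---to be the main obstacle, the remaining steps being the inductive bookkeeping described above.
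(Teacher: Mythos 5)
This lemma is not proved in the paper at all: it is imported verbatim from Revin--Vdovin \cite{RV}, and the authors explicitly flag it as a result ``whose proof uses CFSG.'' Measured against that, the decisive defect in your attempt is the claim that the forward direction is elementary (``CFSG-free''). Inheritance of $D_\pi$ by normal subgroups is precisely Wielandt's problem, open for decades and settled by Revin and Vdovin only by means of the classification; no classical transfer argument is known, and yours does not work. Concretely: from $G\in D_\pi$ you correctly get that the Hall $\pi$-subgroups of $A$ are the subgroups $(H\cap A)^g$, $g\in G$, hence form a single $G$-class, and dominance inside $A$ follows; but your Frattini step is circular. The Frattini argument yields $G=A\,N_G(H\cap A)$ only when the Hall $\pi$-subgroups of $A$ are already known to be conjugate \emph{in $A$} (as Sylow's theorem supplies in the classical case); transitivity of the $G$-action gives only the tautology $|G|=|G:N_G(P)||N_G(P)|$ and says nothing about the number $|G:A\,N_G(P)|$ of $A$-orbits. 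Since your quotient argument then funnels through ``$A$ is a $D_\pi$-subgroup'' (needed to run $K=A\,N_K(P)$ and lift Hall $\pi$-subgroups of $G/A$), the whole forward direction collapses onto this unproved step. Two further slips: Schur--Zassenhaus cannot be ``applied to the normal $D_\pi$-subgroup $A$'' itself, since $(|A|,|G:A|)=1$ fails in general (it is legitimate only in coprime sections such as $N_K(P)/P$); and one cannot rescue conjugacy via Gross's theorem \cite{Gro2}, both because that too rests on CFSG and because the lemma is stated for an \emph{arbitrary} set of primes $\pi$, not only odd ones.

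For the reverse direction your architecture (induction, reduction to a minimal normal subgroup, the elementary abelian/$\pi$-kernel/$\pi'$-kernel cases via the Chunikhin-type lemma and Schur--Zassenhaus with Feit--Thompson for conjugacy of complements, then the semisimple case via the wreath embedding into $\Aut(S)\wr\mathrm{Sym}(k)$) does resemble the actual Revin--Vdovin strategy. But the two inputs you grant yourself --- the classification of the simple $D_\pi$-groups and, above all, the statement that $S\unlhd X\le \Aut(S)$ with $S$ simple $D_\pi$ forces $X\in D_\pi$ --- constitute the substance of the theorem; note that the latter is itself an instance of the very extension criterion being proved, so assuming it is again circular unless verified independently through the case-by-case analysis of Hall subgroups in groups of Lie type and their field and graph automorphisms. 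You candidly identify this as ``the main obstacle,'' which is accurate: what you have written is a plausible roadmap to the theorem of \cite{RV}, not a proof, and its one genuinely novel claim --- an elementary forward direction --- is false as argued above.
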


The following Lemma~\ref{primitive} on simple groups of Lie type will be essential for the proof of our main theorem.
We introduce first some additional terminology and notation. Let $n$ be a positive integer and $p$ be a prime number. A prime $r$ is said
to be \textit{primitive with respect to the pair $(p, n)$} if
$r$ divides $p^n-1$ but $r$ does not divide $p^k-1$ for every
integer $k$ such that $1\le k< n$. It was proved by
Zsigmondy~\cite{Zsi} that such a primitive prime $r$ exists  unless either $n=2$ and $p$ is a Mersenne prime or
$(p,n)=(2,6)$. For such a  prime $r$, it holds
 $r-1\equiv 0 \pmod n$ and, in particular, $r\geq n+1$.

In the sequel, for $q=p^e$, $e\ge 1$, we will denote by $q_n$ \emph{any} primitive  prime divisor of $p^{en}-1$, i.e. primitive with respect to the pair $(p, ne)$.

\begin{lem}\textup{(\cite[Lemma 14]{paper4})}\label{primitive}
For $N=G(q)$  a classical simple group of Lie type of characteristic $p$  and  $q=p^e$, and $N\unlhd G\le \Aut(N)$,   there exist primes $r, \, s \in \pi(N) \setminus \pi(G/N)$ and maximal tori $T_1$ and $T_2$ of $N$ as stated in Table 1.

Moreover, except for the case denoted  $(\star )$ in Table 1, for any element $a \in N$ of order $r$ and any element $b \in N$ of order $s$ we may assume  that   $C_{N}(a) \leq T_1$ and $C_{N}(b) \leq T_2$, and these are abelian $p'$-groups.

On the other hand, there is neither a  field automorphism nor a graph-field automorphism of $N$ centralizing elements of $N$ of order $r$ or $s$ (except for the triality automorphism in the case $P\Omega_{8}^{+}(q)$).
\end{lem}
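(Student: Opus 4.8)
The plan is to verify the statement family by family over the classical groups of Lie type --- the linear groups $A_{n-1}(q)=\mathrm{PSL}_n(q)$, the unitary groups ${}^2A_{n-1}(q)=\mathrm{PSU}_n(q)$, the symplectic groups $C_n(q)=\mathrm{PSp}_{2n}(q)$, and the orthogonal groups $B_n(q)$, $D_n(q)$, ${}^2D_n(q)$ --- reading off the order formula of $N$ in each case and choosing $r$ and $s$ to be primitive prime divisors $q_m$ (Zsigmondy primes) for two values of $m$ attached to the rank, exactly as recorded in the columns of Table~1. The first task is to secure $r,s\in\pi(N)\setminus\pi(G/N)$. Since $|G/N|$ divides $|\mathrm{Out}(N)|=d\cdot f\cdot g$, the product of the diagonal, field and graph contributions, it suffices to choose $m$ large enough (relative to the rank) that the primitive-divisor bound $r\ge em+1$ forces $r$ and $s$ to exceed every prime divisor of $|\mathrm{Out}(N)|$; here one uses that a primitive divisor of $p^{em}-1$ with $m>1$ cannot divide $q-1$ (so it misses the diagonal part $d$) nor the exponent $e$ (so it misses the field part $f$), and is too large to divide the graph part $g\in\{2,3\}$.

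Next I would compute the centralizers. As $p'$-elements, $a$ and $b$ are semisimple, and $m$ is chosen so that the cyclotomic value $\Phi_m(q)$ divides the order of a unique class of maximal tori; this makes such elements \emph{regular}. By the standard description of centralizers of semisimple elements in a finite reductive group (Springer--Steinberg), the connected centralizer of a regular semisimple element is a maximal torus, whence $C_N(a)\le T_1$ and $C_N(b)\le T_2$ for the prescribed tori. Since every maximal torus of a group of Lie type is abelian of order prime to the defining characteristic $p$, the groups $T_1,T_2$ are automatically abelian $p'$-groups, giving the second assertion outside the distinguished case $(\star)$, in which the relevant element is not regular and its centralizer is recorded separately as a proper overgroup of the torus.

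For the third assertion I would show that no field or graph-field automorphism $\varphi$ of $N$ centralizes $a$ or $b$. Indeed, the fixed-point subgroup $C_N(\varphi)$ is, up to diagonal corrections, a group of Lie type of the same type over a proper subfield $\mathbb{F}_{q_0}$ with $q_0=p^{e/f}$ and $f>1$, so its order involves only cyclotomic values $\Phi_{(e/f)m}(p)$. By the primitivity of $r$ with respect to $(p,em)$ the prime $r$ does not divide $|C_N(\varphi)|$, and hence $a\notin C_N(\varphi)$; the same applies to $b$. The only genuine exception is the triality automorphism of $P\Omega_8^+(q)$, whose centralizer is of type $G_2(q)$ and which must be inspected directly.

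The main obstacle, as is typical for such statements, is the boundary. Zsigmondy's theorem fails precisely when $em=2$ with $p$ a Mersenne prime and when $(p,em)=(2,6)$, and in the groups of small rank the two natural choices of $m$ may coincide or the chosen semisimple element may fail to be regular; these are exactly the configurations flagged as $(\star)$ and $P\Omega_8^+(q)$. Each must be resolved by an explicit determination of $|N|$, of the maximal tori, and of the relevant centralizer, substituting an alternative primitive prime wherever the generic choice is unavailable, and checking the field- and graph-automorphism claim by hand.
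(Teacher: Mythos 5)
The paper does not actually prove this lemma: it is imported verbatim from Lemma~14 of the authors' earlier article (reference \cite{paper4}), so there is no in-paper argument to compare yours against. Judged on its own, your outline follows the standard route and its three pillars are sound: a primitive prime $q_m$ with $m\ge 3$ lies outside $\pi(G/N)$ because $r\ge em+1$ exceeds the field and graph contributions to $|\mathrm{Out}(N)|$ while primitivity excludes the diagonal part (this is exactly \cite[2.4 Proposition B]{LPS}, which the paper invokes elsewhere); the containment $C_N(a)\le T_1$ comes from semisimplicity plus regularity; and a field or graph-field automorphism $\varphi$ has $C_N(\varphi)$ of the same type over a proper subfield, whose order involves only factors $p^{(e/f)k}-1$ with $k<mf$, none divisible by $r$. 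Two caveats. First, the actual mathematical content of the lemma \emph{is} Table~1 --- which index $m$ to choose for each family and each parity of the rank, the exact torus orders, the Remarks-column exclusions, and the $(\star)$ case --- and your plan defers all of that to ``explicit determination,'' so what you have is a correct program rather than a proof. Second, your justification of regularity (``$\Phi_m(q)$ divides the order of a unique class of maximal tori, hence the element is regular'') is a heuristic; the argument that actually delivers $C_N(a)\le T_1$, and that visibly breaks in the $(\star)$ case, is the concrete one: lift $a$ to the relevant matrix group and observe that the Frobenius orbit of an $r$-th root of unity has length $m$, so $a$ acts irreducibly (or with an irreducible summand of codimension at most $2$) on the natural module, whence the centralizer is of type $GL_1(q^m)$ --- respectively a product with a small classical factor such as $Sp_2(q)$, which is precisely the $(\star)$ obstruction for $PSp_{2n}(q)$ with $n$ even and $s=q_{2(n-1)}$. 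A minor point: unitary groups appear in your plan but have no row in Table~1, so the lemma makes no claim about them.
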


\begin{table}[ht!]
$$\begin{array}{c|c|c|c|c|c}
 N& r &s& |T_1|& |T_2| & Remarks \\

\hline
& & & & & \\
 L_n(q)&  q_n &q_{n-1} & \frac{q^n-1}{(n, q-1)(q-1)}&\frac{q^{n-1}-1}{(n, q-1)}& (n, q) \neq (6, 2) \\
n \geq 4 & & & & & (n-1, q) \neq (6,2)\\
& & & & &  \\

\hline
& & & & & \\
 PSp_{2n}(q) &  q_{2n} & q_{2(n-1)} & \frac{q^{n}+1}{(2, q-1)}& \frac{(q^{n-1}+1)(q+1)}{(2, q-1)}&  n \mbox{ even } \quad (\star) \\
 & & & & & (n, q) \neq (4,2) \\
  P\Omega_{2n+1}(q) & & & & & \\
  &  q_{2n} & q_{n} & \frac{q^{n}+1}{(2, q-1)} &  \frac{(q^{n}-1)}{(2, q-1)}& n  \mbox{ odd} \\
  n\geq 3 & & & & & (n, q) \neq (3, 2)  \\
& & & & &  \\

\hline
& & & & & \\
P\Omega_{2n}^{-}(q)  & q_{2n} & q_{2(n-1)} & \frac{q^{n}+1}{(4, q^n+1)}& \frac{(q^{n-1}+1)(q-1)}{(4, q^n+1)}&  (n, q) \neq (4, 2) \\
n \geq 4 & & & & &  \\
& & & & &  \\

\hline
& & & & &  \\
P\Omega_{2n}^{+}(q) & q_{2(n-1)} & q_{n-1} &\frac{(q^{n-1}+1)(q+1)}{(4, q^n-1)}  &\frac{(q^{n-1}-1)(q-1)}{(4, q^n-1)} &  n \mbox{ even } \\
& & & & &  (n, q) \neq (4, 2)\\

  n \geq 4  & q_{2(n-1)} &  q_{n} &\frac{(q^{n-1}+1)(q+1)}{(4, q^n-1)} & \frac{q^n-1}{(4, q^n-1)} & n \mbox{ odd}\\
  & & & & &  \\
 \hline
 \end{array}$$
 \caption{Table 1}
 \end{table}

We refer to \cite{VV,VV2} for more accurate information about maximal tori in simple groups of Lie type.

\begin{lem}\textup{(\cite[Lemma 5]{paper4})}\label{outp} Let $N=G(q)$ be a classical simple group of Lie type over the field $GF(q)$ of characteristic $p$. Then $ |Out(N)|_p \leq q$ and equality holds only when  $q \in\{2, 3, 4\}$. Moreover, if $q=3$, the only case in which  $|Out(N)|_p = q$ is possible is when $N=P\Omega^+_8(q)$.
In particular, $|Out(N)|_p < q^2$ for any classical simple group of Lie type.
\end{lem}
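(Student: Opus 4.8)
The plan is to read $|Out(N)|_p$ off the standard description of the automorphism group of a classical simple group of Lie type and then bound its factors. Recall that $Out(N)=\mathrm{Outdiag}(N)\rtimes(\Phi_N\times\Gamma_N)$, where $\mathrm{Outdiag}(N)$ is the group of outer diagonal automorphisms, $\Phi_N$ is the (cyclic) group of \emph{field} automorphisms, and $\Gamma_N$ is generated by \emph{graph} automorphisms. The order of $\mathrm{Outdiag}(N)$ is a product of greatest common divisors of the form $(m,q-1)$, $(m,q+1)$ or $(4,q^m\pm1)$ depending on the type of $N$; since $q=p^e$ gives $q\equiv 0\pmod p$, each such gcd is coprime to $p$, so $|\mathrm{Outdiag}(N)|$ is coprime to $p$. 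Hence $|Out(N)|_p=|\Phi_N|_p\,|\Gamma_N|_p$, and the whole problem reduces to the $p$-parts of the field and graph contributions.

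First I would record these two factors type by type. The field group $\Phi_N$ is cyclic of order $e$ for the untwisted types $L_n(q)$, $PSp_{2n}(q)$, $P\Omega_{2n+1}(q)$ and $P\Omega_{2n}^{+}(q)$, and of order $2e$ for the twisted types $PSU_n(q)={}^2A_{n-1}(q)$ and $P\Omega_{2n}^{-}(q)={}^2D_n(q)$ (the extra factor $2$ coming from the twisting); thus $|\Phi_N|_p=e_p$ for $p$ odd, while $|\Phi_N|_2\le 2e_2$. The graph group $\Gamma_N$ is trivial for $PSU_n(q)$, $PSp_{2n}(q)$ (with $n\ge3$), $P\Omega_{2n+1}(q)$ and $P\Omega_{2n}^{-}(q)$, has order $2$ for $L_n(q)$ ($n\ge3$) and $P\Omega_{2n}^{+}(q)$ ($n\ge5$), and equals $S_3$ (order $6$) for $P\Omega_8^{+}(q)$; the only further contribution is the exceptional graph automorphism of $PSp_4(q)$ in characteristic $2$, of order $2$. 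Consequently $|\Gamma_N|_p=1$ whenever $p\ge5$, $|\Gamma_N|_3=3$ only for $P\Omega_8^{+}(q)$, and $|\Gamma_N|_2\le2$; since the twisting factor and a graph automorphism never occur together for these types, in all cases $|Out(N)|_2\le 2e_2$. The estimates will then rest on the elementary inequalities $e_p\le e<p^e=q$ (valid for all $e\ge1$, $p\ge2$), $e\le 2^{e-1}$ (equality exactly for $e\in\{1,2\}$) and $e\le 3^{e-1}$ (equality exactly for $e=1$).

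Finally I would combine the factors according to the characteristic. If $p\ge5$ then $|Out(N)|_p=e_p\le e<q$. If $p=3$ then either $|Out(N)|_p=e_3\le e<q$, or $N=P\Omega_8^{+}(q)$ and $|Out(N)|_3=3e_3\le 3e\le 3^e=q$; here equality needs $e_3=3^{e-1}$, whence $3^{e-1}=e_3\le e$ forces $e=1$, i.e. $q=3$. If $p=2$ then $|Out(N)|_2\le 2e_2\le 2e\le 2^e=q$; here equality needs $e_2=2^{e-1}$, whence $2^{e-1}=e_2\le e$ forces $e\in\{1,2\}$, i.e. $q\in\{2,4\}$ (and this requires the graph or twisting contribution, the graph-free symplectic case giving only $e_2<q$). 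Thus $|Out(N)|_p\le q$ in every case, with equality only for $q\in\{2,3,4\}$ and, when $q=3$, only for $P\Omega_8^{+}(3)$; and since $q\ge2$ we conclude $|Out(N)|_p\le q<q^2$. The main care is needed in pinning down the orders of $\Phi_N$ and $\Gamma_N$ for the twisted groups (the factor $2$ in $|\Phi_N|$) and in not overlooking the exceptional graph automorphism of $PSp_4(q)$ in characteristic $2$; once these are fixed type by type, only the elementary arithmetic above remains.
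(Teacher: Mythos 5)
Your proof is correct. Note that the paper does not actually prove this lemma; it is imported verbatim from \cite[Lemma 5]{paper4}, so there is no internal proof to compare against. Your derivation via the standard formula $|Out(N)|=d\cdot f\cdot g$ (diagonal, field, graph contributions), the observation that the diagonal part is coprime to $p$, and the elementary inequalities $e_p\le e<p^e$, $e\le 2^{e-1}$, $e\le 3^{e-1}$ is the natural argument and matches what the cited source does. The two delicate points you flag --- the factor $2$ in the field group of the twisted types ${}^2A_{n-1}(q)$ and ${}^2D_n(q)$, and the extraordinary graph automorphism of $PSp_4(q)$ in characteristic $2$ --- are exactly the places where a sloppy version of this argument would go wrong, and you handle both correctly; the equality analysis pinning down $q\in\{2,3,4\}$ and the special role of $P\Omega_8^+(3)$ (via $|\Gamma_N|_3=3$ only for triality) is also right.
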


We will use the main results in \cite{LPS} about factorizations of the simple groups and their automorphisms groups by their  maximal subgroups. Also the information from \cite{VRsurvey} about simple groups satisfying property $D_{\pi}$ will be needed in some cases.

\section{The minimal counterexample}

We describe next the structure of a minimal counterexample to our Main Theorem.

\begin{pro}\label{mincount}
Let $\pi$ be a set of odd primes. Assume that  the group $G=AB$ is the product of two
$\pi$-decomposable subgroups $A=A_{\pi} \times A_{\pi'}$ and $B=B_{\pi} \times B_{\pi'}$ and $G$ is a counterexample of minimal order to the assertion that $G$ satisfies property $D_{\pi}$.

Then $G$ has a unique minimal normal subgroup $N$, which is a non-abelian simple group,  so that $N \unlhd G \leq \Aut(N)$.

 Moreover,  the following properties hold:
\begin{enumerate}

\item[(1)] $G=AN=BN=AB$.

In particular, $|N||A \cap B|=|G/N||N\cap A||N \cap B|$.
\item[(2)]  One of the subgroups, say $B$, is a $\pi'$-group, i.e. $B_{\pi}=1$. Hence, $\pi(G/N) \subseteq \pi'$.
\item[(3)]  $A$ is neither a $\pi$-group nor a $\pi'$-group, i.e. $A_{\pi} \neq 1$ and $A_{\pi'}  \neq 1$.
\item[(4)] $A_{\pi}$ is a Hall $\pi$-subgroup of $N$ (and of $G$), so $|\pi(A)| \geq 3$.

In particular, $N$ satifies $E_\pi$, but not $D_{\pi}$ neither $E_{\pi'}$, and so $|\pi \cap \pi(N)| \geq 2$ and $|\pi' \cap \pi(N)| \geq 2$.
\item[(5)]  The subgroup $A_\pi$ is not nilpotent.

\item[(6)] Let $S \leq A$ be an $s$-group, for a prime number $s$. If $s \in \sigma$, with $\sigma \in \{\pi, \pi'\}$, then $\pi(|A:C_{A}(S)|) \subseteq \sigma$ and $C_A(S)$ is not a $\sigma$-group.

\item[(7)]  Either $A$ or $B$ is non-soluble.

Moreover, for each prime $r \in \pi$ and any other prime  $s \in \pi(A \cap N)$, there exists a soluble subgroup of $N$ of order divisible by $r$ and $s$.

\item[(8)]  $|\pi' \cap \pi(N)| \geq 3$, and $|\pi(N)| \geq 5$. Moreover, $\pi(G)=\pi(N)$.

\item[(9)] Assume that $N$ is a simple group of Lie type over the field $GF(q)$ of characteristic $p$. Then the following assertions hold:
\begin{itemize}
\item[a)] $p \in \pi'$.
\item[b)] If $N$ is of Lie rank $l > 1$, then $p \in \pi(B \cap N)$.
\item[c)] Assume that there exists an element $ a \in A \cap N$  of prime order $r$ and a subgroup $X$ in $G$  containing $A$ such that $C_{X \cap N}(a)$ is an abelian $p'$-subgroup.  Then $r \in \pi$, $A$ is soluble, $A \cap N$ is a $p'$-group and $|A|_p \leq q$.

If, in addition, $C_{X}(a)$ is a $p'$-group, then $A$ is a $p'$-group.

\end{itemize}
\end{enumerate}
\end{pro}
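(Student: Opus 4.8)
The plan is to argue by minimal counterexample, using as the main engine the fact that the class ``products of two $\pi$-decomposable groups'' is closed under quotients, together with the composition-factor criterion of Lemma~\ref{RV}. Note first that $G \in E_\pi$ by Theorem~\ref{mainth} and $G \in C_\pi$ by Gross \cite{Gro2}, so the failure of $D_\pi$ is purely a failure of dominance. First I would show that for every nontrivial $M \unlhd G$ the quotient $G/M = (AM/M)(BM/M)$ is again a product of two $\pi$-decomposable groups of smaller order, hence lies in $D_\pi$ by minimality; Lemma~\ref{RV} and $G \notin D_\pi$ then force $M$ to fail $D_\pi$ whenever $M$ is minimal normal. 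Since any $\pi$-group and any $\pi'$-group trivially satisfies $D_\pi$, no minimal normal subgroup is abelian, and moreover $O_\pi(G) = O_{\pi'}(G) = 1$ (a nontrivial such subgroup would contain a minimal normal subgroup that is then a $\pi$- or $\pi'$-group, hence a $D_\pi$-group, a contradiction).

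Next I would determine the socle. Fixing a minimal normal subgroup $N$ and setting $C = C_G(N)$: if $C \neq 1$, then $G/C$ is a smaller product of $\pi$-decomposable groups, so $G/C \in D_\pi$ by minimality, and since $NC/C \cong N$ is normal in $G/C$, Lemma~\ref{RV} would give $N \in D_\pi$, contradicting the first step. Hence $C_G(N) = 1$, which yields $N \unlhd G \leq \Aut(N)$ and, because distinct minimal normal subgroups centralise one another, the uniqueness of $N$. Establishing that $N$ is \emph{simple} rather than a proper direct power of a simple group is the first genuinely delicate point: here I would use that $G$ permutes the simple direct factors of $N$ through the $\pi'$-quotient $G/N$ (see below) and combine this with the factorization $G = AB$ to exclude more than one factor.

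The numbered assertions then follow in a fairly rigid order. For (1), the modular law gives $AN = A(AN \cap B)$, and since subgroups of $\pi$-decomposable groups are again $\pi$-decomposable, a proper $AN$ would be a smaller product of $\pi$-decomposable groups, hence a $D_\pi$-group, forcing $N \in D_\pi$ by Lemma~\ref{RV}; thus $G = AN = BN$, and the order identity is just counting from $|G| = |A||N|/|A \cap N| = |B||N|/|B \cap N| = |A||B|/|A \cap B|$. For (2) I would apply Theorem~\ref{mainth} to get $A_\pi B_\pi = B_\pi A_\pi$ and then Lemma~\ref{2.5.1}, which with $O_\pi(G) = 1$ gives $[A_\pi^G, B_\pi^G] = 1$; if both $A_\pi$ and $B_\pi$ were nontrivial their normal closures would each contain the unique minimal normal subgroup $N$, forcing $N$ abelian, so one factor, say $B$, is a $\pi'$-group and $\pi(G/N) \subseteq \pi'$. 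Now (4) drops out: $A_\pi$ is a Hall $\pi$-subgroup of $G$ by Theorem~\ref{mainth}, it lies in $N$ because $G/N$ is a $\pi'$-group, so it is a Hall $\pi$-subgroup of $N$; that $N \in E_\pi \setminus D_\pi$ is already known, while the failure of $E_{\pi'}$ and the bounds $|\pi \cap \pi(N)|, |\pi' \cap \pi(N)| \geq 2$ follow from Wielandt's Lemma~\ref{Wie} (a nilpotent Hall $\pi$-subgroup would give $D_\pi$, and if $|\pi' \cap \pi(N)| \leq 1$ a Sylow subgroup would give $E_{\pi'}$) together with the classification data in \cite{VRsurvey}. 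Claim (5) is again Lemma~\ref{Wie}. For (3): $A$ cannot be a $\pi'$-group, since then $G = AB$ would be a $\pi'$-group and $N$ a $D_\pi$-group; and $A$ cannot be a $\pi$-group, since then $A \leq N$ forces $G = N$ with $B$ a Hall $\pi'$-subgroup, contradicting $N \notin E_{\pi'}$. Finally (6) is a direct computation in $A = A_\pi \times A_{\pi'}$: if $S \leq A_\sigma$ then $A_{\sigma'} \leq C_A(S)$, whence $|A : C_A(S)|$ is a $\sigma$-number and $C_A(S) \geq A_{\sigma'} \neq 1$, so $C_A(S)$ is not a $\sigma$-group.

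The last assertions (7)--(9) are where the classification of finite simple groups enters decisively, and this is where I expect the main difficulty to lie. For (7) I would assume both $A$ and $B$ soluble and derive $G \in D_\pi$ (soluble groups being $D_\pi$) via the soluble-factor analysis (cf. \cite{paper2}) applied through the trifactorized factorizer of $N$ and Theorem~\ref{dpitri}; the auxiliary existence of soluble subgroups of $N$ of order divisible by a given $r \in \pi$ and $s \in \pi(A \cap N)$ is then read off from $A_\pi \leq N$ and the commuting direct-factor structure of $A$. The cardinality sharpenings in (8) and, above all, the structural statements in (9) --- that $N$ is of Lie type in characteristic $p \in \pi'$, that $p \mid |B \cap N|$ when the Lie rank exceeds $1$, and the centralizer/torus constraints --- are obtained by running Lemma~\ref{primitive} and Lemma~\ref{outp} against the factorization $G = AB$, the fact that $A_\pi$ is a Hall $\pi$-subgroup of $N$, and the maximal-subgroup factorizations of \cite{LPS}. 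The crux, and the hardest part, is precisely this simple-group bookkeeping: one must simultaneously respect that $N$ is an $E_\pi$- but not $D_\pi$-group, that $G = BN$ is a $\pi'$-extension of $N$, and the rigidity of factorizations of almost simple groups, and it is the interaction of these three constraints --- rather than any single estimate --- that does the work.
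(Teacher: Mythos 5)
Your treatment of the reduction to an almost simple group and of parts (1)--(6) is essentially sound, and in places cleaner than the paper's: you get uniqueness of the minimal normal subgroup from $C_G(N)=1$ rather than via the factorizer, and once a unique non-abelian minimal normal subgroup is in hand, your derivation of (2) from $[A_\pi^G,B_\pi^G]=1$ (both closures would contain $N$, forcing $N$ abelian) short-circuits the paper's longer argument, which instead shows directly that every $\pi$-subgroup lies in a conjugate of $A_\pi B_\pi$ because $G/A_\pi^G$ and $G/B_\pi^G$ are $D_\pi$-groups. (The paper needs that longer route because it establishes (2) \emph{before} identifying the socle; your ordering avoids this, but you must then be careful that the simplicity of $N$ --- which you defer --- really does come only after (2), since the argument for it uses $A_\pi\le N$ and the fact that $A_{\pi'}$ permutes the simple components of $N$ transitively while centralizing $A_\pi$.)

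The genuine gaps are in (7)--(9). For (7), your proposed mechanism fails: a product of two soluble groups need not be soluble, so you cannot conclude $G\in D_\pi$ from ``soluble groups are $D_\pi$''; and Theorem~\ref{dpitri} is unavailable because the factorizer of $N$ is all of $G$ (by (1)), so there is no proper trifactorized subgroup and no third $D_\pi$-factor $C$. What the paper actually does is invoke Kazarin's theorem that a product of two soluble groups has its non-abelian composition factors in an explicit short list $\mathfrak{M}$, and then eliminates each $N\in\mathfrak{M}$ case by case using the soluble factorizations in \cite{LiX} and \cite{LPS} together with (4)--(6) (self-centralizing Sylow subgroups, the $D_\pi$-status of $M_{11}$, the structure of the maximal soluble factors of $L_2(q)$). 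For (8), the equality $\pi(G)=\pi(N)$ is not bookkeeping: for a hypothetical prime $s\in\pi(G)\setminus\pi(N)$ one must construct a proper subgroup $\tilde G=(A_\pi\times\tilde A)\tilde B$ of $G$ containing $N$ that is again a product of two $\pi$-decomposable groups (using that a Sylow $s$-subgroup lies in $A\cap B$), and then apply minimality; this construction is entirely absent from your sketch. Finally, (9) does not in fact need the \cite{LPS} factorizations at all: (a) is (6) applied to a Sylow $p$-subgroup of $N$, whose centralizer in $\Aut(N)$ is a $p$-group; and (c) is a short direct argument --- $r\in\pi'$ would put $A_\pi$ inside the abelian group $C_{X\cap N}(a)$ against (5), so $r\in\pi$, whence $A_{\pi'}\cap N\le C_{X\cap N}(a)$ is abelian, $A$ is soluble, $A\cap N$ is a $p'$-group, and $|A|_p=|G/N|_p\le q$ by Lemma~\ref{outp}. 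Asserting that these statements are ``obtained by running Lemma~\ref{primitive} and Lemma~\ref{outp} against the factorization'' does not supply the arguments.
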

\begin{proof}

We prove first statements (1)-(4).

Recall that  $A_{\pi}B_{\pi}$ is a Hall $\pi$-subgroup of $G$ by Theorem~\ref{mainth}.

 Notice that the hypotheses of the result hold for factor groups. Hence whenever $N$ is a non-trivial normal subgroup of $G$, the minimality of $G$  implies that $G/N$ is a $D_{\pi}$-group. If in addition $N$ were a $D_{\pi}$-group, then $G$ would be a $D_{\pi}$-group by Lemma~\ref{RV}, which is not the case.
In particular, we deduce that $O_\pi(G)=O_{\pi'}(G)=1$.

By Lemma~\ref{2.5.1}, it follows that $[A_\pi^G, B_\pi ^G]= 1$.

We consider now the case that  $A_\pi\neq 1$ and $B_\pi \neq 1$. We notice that $A_\pi^G\cap B_\pi^G=1$. Otherwise, if $N$ is a minimal normal subgroup of $G$ contained in $A_\pi^G\cap B_\pi^G$, then $[N,N]=1$, i.e. $N$ is abelian, and then either $N\le O_\pi(G)=1$ or $N\le O_{\pi'}(G)=1$, a contradiction.

Let $H$ be a $\pi$-subgroup of $G$. We aim to prove that $H\le (A_{\pi}B_{\pi})^g$ for some $g\in G$, to get a contradiction and derive that either $A_\pi=1$ or $B_\pi=1$.
By minimality of $G$, both factor groups $G/A_\pi^G$ and $G/B_\pi^G$ satisfy the property $D_{\pi}$, and so we may assume:
\begin{gather*}
H \leq A_{\pi} B_{\pi}A_\pi^G = B_{\pi} A_{\pi}^G \\
H \leq (A_{\pi} B_{\pi}B_{\pi}^G)^{g} = A_{\pi}^{b} B_{\pi}^G
\end{gather*}
for some $g=ab$ with $a\in A$, $b\in B$, since $B_{\pi}^{G}$ is normal in $G$ and $A_\pi$ is normal in $A$.  Now applying that $A_{\pi}^{G}$ is normal in $G$ and $B_\pi$ is normal in $B$, we deduce:
\begin{gather*}
H \leq B_{\pi} A_{\pi}^G \cap A_{\pi}^{b} B_{\pi}^G = (B_{\pi} A_{\pi}^G  \cap A_{\pi} B_{\pi}^G)^{b}= \\
 (A_{\pi}B_{\pi} (A_{\pi}^{G} \cap B_{\pi}^{G}))^{b}=(A_{\pi}B_{\pi})^{b}
\end{gather*}
as aimed. So, without loss of generality, we can consider  $B_{\pi}=1$. Also, if  $G$ would satisfy $E_{\pi}$ and $E_{\pi'}$, then $G$ would be a $D_{\pi}$-group (see \cite{A-F}), a contradiction. Hence $A_{\pi} \neq 1$ and $A_{\pi'} \neq 1$.

Let $N$ be a minimal normal subgroup of $G$, and let $X(N)$ be the factorizer of $N$ in $G=AB$. Assume that $X(N)$ is a proper subgroup of $G$. Then the minimality of $G$ implies that $X(N)$ satisfies $D_\pi$. Since $N\unlhd X(N)$ we have that $N$ is a $D_\pi$-group. But also $G/N$ satisfies $D_{\pi}$. Hence we get that $G$ is a $D_{\pi}$-group by Lemma~\ref{RV}, a contradiction. Therefore $X(N)=G=AB=AN=BN$. By order arguments we have that $A_\pi\le N$.  If $M$ were another minimal normal subgroup of $G$, then it would be $A_\pi\le N\cap M=1$, a contradiction. Consequently, $G$ has a unique minimal normal subgroup, say $N$,  which is non-abelian.

Hence, $N\cong S\times \cdots \times S$ is a direct product of copies of a non-abelian simple group $S$. Since $G= A_{\pi'}N$, it follows that $A_{\pi'}$ permutes transitively the non-abelian simple components of $N$ and centralizes $A_\pi$, which implies that $N$ is a simple group.

Consequently, $G=AB=AN=BN$ is an almost simple group, i.e. $N \leq G \leq  \Aut(N)$ with $N$ a non-abelian simple group, $A_\pi \neq 1$, $A_{\pi'} \neq 1$, $B_\pi = 1$ and $B_{\pi'} \neq 1$. In particular, $A_{\pi} \leq N$, and $A_{\pi}$ is a Hall $\pi$-subgroup of $N$, and also of $G$, since $G/N=BN/N$ is a $\pi'$-group. Moreover, $N$ is not a $D_{\pi}$-group by Lemma~\ref{RV}. In particular, $|\pi| \geq 2$ and $|\pi(A)| \geq 3$. Also,  $N$ is not an $E_{\pi'}$-group, since it is an $E_{\pi}$-group (see \cite{A-F}). A straightforward computation shows that $|N||A \cap B|=|G/N||N\cap A||N \cap B|$. Hence statements (1)-(4) hold.

\medskip

\noindent (5) Assume that $A_\pi$ is nilpotent. Then, since this is a Hall $\pi$-subgroup of $G$, it follows that $G$ is a $D_{\pi}$-group, by Lemma \ref{Wie}, a contradiction.

\medskip

\noindent (6) Assume that $S$ is an $s$-subgroup of $A$, with $s \in \sigma$, and either $\sigma =\pi$ or $\sigma =\pi'$. The first statement  is clear since $A_{\sigma'} \leq C_A(S)$.
Consequently, if $C_{A}(S)$ were a $\sigma$-group, $A$ would be a $\sigma$-group, a contradiction by (3).

\medskip

\noindent (7) Assume now that both $A$ and $B$ are soluble. From \cite{Ka2}  it is known that $N$ should be isomorphic to one of the groups in the set:
$$ \mathfrak{M} =\{ L_2(q), q > 3; L_3(q), q < 9; L_4(2), M_{11}, \mbox{PS}p_4(3), U_3(8) \}.$$
All factorizations for such a group $G=AB$, with $A$ and $B$ soluble, can be found in \cite[Proposition 4.1]{LiX}.  In fact, as stated in this reference, the groups $L_3(5)$, $L_3(7)$ and $L_4(2)\cong A_8$ can be omitted.
For any of the groups in $\mathfrak{M} \setminus \{ L_2(q), q > 3;$ $ M_{11}, L_3(5), L_3(7), L_4(2) \}$, we get a contradiction with the assertion in (4) that $|\pi \cap \pi(N)| \geq 2$ and $|\pi' \cap \pi(N)| \geq 2$. In the cases in which $|\pi(N)|=4$, this is  deduced from the fact that $N$ has a self-centralizing Sylow $s$-subgroup for some large prime $s \not\in \pi (A) $ by (6), and taking into account the factorizations of $G$ and the fact  that $\pi(G)=\pi(N)$ in those cases.

 The case $N=M_{11}$  is discarded because, if it satisfies $E_{\pi}$ for a set of odd primes, then it satisfies $D_{\pi}$ (see \cite{VRsurvey}).

Consider now the case $N=L_2(q)$, with $q > 3$. If $p$ is the characteristic, then $ p \not\in \pi$ by (6). From \cite{LPS}
it follows that, apart from some exceptional cases with $q \in \{7, 11, 23 \}$ which can be discarded by similar arguments as used above, the maximal soluble subgroups $X$ and $Y$ of $G$ such that $G=XY$ satisfy the condition $\{X \cap N, Y\cap N \}=\{N_N(P), D_{\nu(q+1)}\}$, with $P$ a Sylow $p$-subgroup of $N$, $|N_N(P)|=\epsilon q(q-1)$ with  $\epsilon=(q-1, 2)^{-1} $, and $D_{\nu(q+1)}$ a dihedral group of order $\nu(q+1)$ with $\nu=(2, p)$. The case that $A \cap N \leq D_{\nu(q+1)}$ can be discarded because this means that $A_{\pi}$ is abelian, which contradicts (5). In the other case, when $A \cap N \leq N_{N}(P)$, since the centralizer of any $p$-element in $N$ is a $p$-group, and $p$ should divide $|N \cap A|$ by order arguments as $B \cap N \leq D_{\nu(q+1)}$, it follows that $A_{\pi}=1$, a contradiction.

Hence, either $A$ or $B$ is non-soluble.

Now, assume that $r \in \pi$ and $s \in \pi(A \cap N)$. If $s \in \pi$, then $A_{\pi}$ is a soluble subgroup  of $N$ of order divisible by $r$ and $s$. If $s \in \pi'$, we can consider the soluble subgroup $A_{\pi} \times (A_{s}\cap N)$ for $A_{s}$ a Sylow $s$-subgroup of $A$. Hence  (7) holds.

\medskip

\noindent (8) From (7), we deduce that  either $N \cap A$ or $N \cap B$ is non-soluble, because $G/N \cong A/(N \cap A)\cong B/(B\cap N)$ is soluble. Hence $|\pi' \cap \pi(N)| \geq 3$ and $|\pi(N)| \geq 5$.
Our next aim is to prove that $\pi(G)=\pi(N)$. Assume that $ \sigma:= \pi(G) \setminus \pi(N) \neq \emptyset$, and take a prime number $s \in \sigma$. Note that $\sigma \subseteq \pi'$, since $N$ contains $A_{\pi}$, which is a Hall $\pi$-subgroup of $G$.
We have $|N||A \cap B|=|G/N||N \cap A||N \cap B|$. Hence $|G: A \cap B|=|N|^{2}/|N \cap A||N \cap B|$, and so $(s,  |G: A \cap B|)=1$.  Then $A\cap B$ contains a Sylow $s$-subgroup of $G$, say $S$.

Let $ \pi_{0}:=\pi' \setminus \{s \}$. Again since $G/N \cong A/(N \cap A)$ is a soluble $\pi'$-group, we may choose a Hall  $\pi_{0}$-subgroup of $A$, say  $\tilde{A}$, such that $A_{\pi'}=\tilde{A}S$, and so $A= A_\pi \times\tilde{A}S$. Let $\tilde{G}:=\tilde{A} N= (A_{\pi} \times \tilde{A}) N $. Consider now $\tilde{B}:=B \cap \tilde{G}=B \cap \tilde{A}N$. Since $A_{\pi'}N=BN$ and $B$ also contains $S$ we can deduce that $B=B\cap A_{\pi'}N=S(B \cap \tilde{A}N)=\tilde{B}S$ and $\tilde{B}$ is a Hall $\pi_0$-subgroup of $B$. Moreover $(A_{\pi} \times \tilde{A}) \cap B=\tilde{A}\cap B=\tilde{A} \cap \tilde{B}$. Since $(|S|,|N \cap A|)=1=(|S|,|N \cap B|)$ it is easy to see that $|\tilde{G}|= |G|/|S|=|(A_{\pi} \times \tilde{A})S\tilde{B}|/|S|=|A_{\pi} \times \tilde{A}||\tilde{B}|/|(A_{\pi} \times \tilde{A})\cap \tilde{B}|=|(A_{\pi} \times \tilde{A})\tilde{B}|$. Hence $\tilde{G}=(A_{\pi} \times \tilde{A})\tilde{B}$ is a subgroup of $G$, which is a product of two $\pi$-decomposable groups, and contains $N$. If $\tilde{G} < G$, then by the choice of $G$ we deduce that $N$ is a $D_{\pi}$-group, which is a contradiction. This implies that $\sigma=\emptyset$ and the assertion (8) follows.

\medskip

\noindent (9) Assume finally that $N$ is a simple group of Lie type of characteristic $p$.
If $P$ is any Sylow $p$-subgroup of $N$, then $C_{\Aut(N)}(P)$ is a $p$-group, so it follows from (6) that $p \in \pi'$. Now, if $N$ has Lie rank $l > 1$, arguing as in the proof of \cite[Lemma 12]{paper4}, we can deduce that $p \in \pi(B \cap N)$.

\smallskip

Finally, assume that there exists an element $ a \in A \cap N$  of prime order $r$ and a subgroup $X$ of $G$ such that  $A \leq X$ and $C_{X \cap N}(a)$ is an abelian $p'$-subgroup.
If $r \in \pi'$, then $A_{\pi} \leq C_{X \cap N}(a)$ and hence $A_{\pi}$ is abelian, a contradiction with (5). Then $r \in \pi$, and  $A_{\pi'} \cap N \leq C_{X \cap N}(a)$ is an abelian  $p'$-group. It follows that $A \cap N$ is soluble, and so is $A$. Moreover, since $p \in \pi'$,  $A \cap N$ is a $p'$-group.  In this case $|A|_p =|AN/N|_p=|G/N|_p \leq q$ by Lemma~\ref{outp}. The last assertion is straightforward. Hence (9) holds.
\end{proof}

For a group $G$ as in Proposition~\ref{mincount}, and its unique minimal normal subgroup $N$, we have the following results:

\begin{lem}\label{sporadic}
$N$ is not a sporadic simple group.
\end{lem}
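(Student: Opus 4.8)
The plan is to derive a contradiction by showing that no sporadic simple group $N$ can support the factorization $G=AB$ with the properties accumulated in Proposition~\ref{mincount}. Since $G$ is almost simple with socle $N$ sporadic, and $|G/N|\le 2$ because $|\mathrm{Out}(N)|\le 2$ for every sporadic group, we have very little room in the $\pi'$-part coming from outside $N$. The strategy is to combine the strong numerical constraints from Proposition~\ref{mincount} with the known list of maximal-subgroup factorizations of the sporadic groups and their automorphism groups due to Liebeck--Praeger--Saxl \cite{LPS}, reading off in each case that the arithmetic cannot be met.

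First I would record the hard constraints that $N$ must satisfy simultaneously: by (4) $N$ is an $E_\pi$-group but not a $D_\pi$-group, with $|\pi\cap\pi(N)|\ge 2$ and $|\pi'\cap\pi(N)|\ge 2$; by (8) in fact $|\pi'\cap\pi(N)|\ge 3$, $|\pi(N)|\ge 5$, and $\pi(G)=\pi(N)$; by (4)--(5) the Hall $\pi$-subgroup $A_\pi=A\cap N$ is non-nilpotent with $|\pi(A)|\ge 3$; and by (7) either $A\cap N$ or $B\cap N$ is non-soluble, so $N$ admits a proper factorization $N=(A\cap N)(B\cap N)$ with one non-soluble factor. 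The first reduction is to invoke the work of Vdovin--Revin as surveyed in \cite{VRsurvey}: a sporadic group that satisfies $E_\pi$ for a set $\pi$ of \emph{odd} primes in fact satisfies $D_\pi$ in essentially all cases. This already eliminates the vast majority of sporadic groups outright, exactly as the authors did for $M_{11}$ in the proof of (7); so the real work is confined to the short list of sporadic $N$ for which $E_\pi$ does \emph{not} force $D_\pi$ for some set of odd primes.

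For each surviving sporadic group I would then pull the relevant maximal factorizations $N=(A\cap N)(B\cap N)$ from \cite{LPS} (and the factorizations of $\mathrm{Aut}(N)$ when $|G/N|=2$), and test them against the above constraints. The decisive tool is Proposition~\ref{mincount}(6): for a prime $s\in\sigma\in\{\pi,\pi'\}$ and an $s$-subgroup $S\le A$, one has $\pi(|A:C_A(S)|)\subseteq\sigma$, so whenever $N$ has a self-centralizing Sylow $s$-subgroup for a large prime $s$, the factor containing a full Sylow $s$-subgroup is forced to have $s$ in its own part of $\pi$ or $\pi'$, which pins down the distribution of the primes of $N$ between the two factors. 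Running this against each factorization typically collapses either the requirement $|\pi'\cap\pi(N)|\ge 3$, or non-solubility of one factor combined with non-nilpotency of $A_\pi$, and yields a contradiction; the non-abelian-Hall condition from (5) is especially effective, since most sporadic factorizations have one factor with abelian or nilpotent Hall $\pi$-part.

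The main obstacle I anticipate is not any single deep idea but the bookkeeping: one must assemble, for each candidate sporadic $N$, both the precise list of sets $\pi$ of odd primes for which $N$ is an $E_\pi$- but not $D_\pi$-group and the maximal factorizations, and then verify the incompatibility case by case. The trickiest instances will be those large sporadic groups (for example the Fischer groups or the Monster's large sections) with several near-misses between $E_\pi$ and $D_\pi$ and with relatively rich factorizations, where one cannot dismiss the case on a single prime and must track the full partition $\pi(N)=\pi\sqcup\pi'$ through the orders of both factors. I expect that in every such case the combination of $|\pi'\cap\pi(N)|\ge 3$ with the non-nilpotency of $A_\pi$ from (5) and the self-centralizing-Sylow consequence of (6) leaves no admissible distribution of primes, completing the elimination.
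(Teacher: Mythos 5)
Your proposal assembles exactly the two ingredients the paper uses --- the Liebeck--Praeger--Saxl classification of factorizations of almost simple sporadic groups, and the Vdovin--Revin data comparing the $E_\pi$- and $D_\pi$-properties for sporadic groups and odd $\pi$ --- but it applies them in the opposite order and, crucially, never finishes. After your first reduction you are left with ``the short list of sporadic $N$ for which $E_\pi$ does not force $D_\pi$,'' and for those you outline a prime-distribution analysis via Proposition~\ref{mincount}(5), (6), (7) whose conclusion is only asserted (``I expect that in every such case \dots leaves no admissible distribution of primes''). That expectation is the entire content of the hard part of your argument, so as written the proof has a genuine gap: the elimination of the residual sporadic groups is not carried out, and you even flag the Fischer groups and large sections of the Monster as cases requiring full bookkeeping without doing any of it.

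The paper's proof shows that this residual casework does not exist if the filters are applied in the other order. Since $B$ is a $\pi'$-group and $N\not\le A$ (a $\pi$-decomposable $A$ cannot contain the simple group $N$, whose order is divisible both by $2$ and by primes of $\pi$), the factorization $G=AB$ has neither factor containing $N$, so by \cite[Theorem C]{LPS} the socle $N$ must lie in the explicit twelve-element list $\{M_{11}, M_{12}, M_{22}, M_{23}, M_{24}, J_2, HS, He, Ru, Suz, Fi_{22}, Co_1\}$. Comparing \cite[Theorem 8.2 (Table 3)]{VRsurvey} with \cite[Theorem 6.9, Condition II]{VRsurvey} shows that every group on this list which is an $E_\pi$-group for a set of odd primes is automatically a $D_\pi$-group, contradicting Proposition~\ref{mincount}(4) directly. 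In other words, the intersection of ``factorizable sporadic'' with ``$E_\pi$ but not $D_\pi$ for some odd $\pi$'' is empty, and no analysis of maximal factorizations, self-centralizing Sylow subgroups, or prime partitions is needed. To repair your write-up you would either have to reorder the two reductions as the paper does, or actually produce and exhaust your ``short list,'' verifying in each case that the claimed contradiction materializes.
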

\begin{proof}

By \cite[Theorem C]{LPS} if $N$ is a sporadic simple group, $N \unlhd G \leq \Aut(N)$,  and $G$ is factorized, we have that $$N \in \{M_{11}, \, M_{12}, \, M_{22}, \, M_{23}, \, M_{24}, \, J_2, \, HS, \, He, \, Ru, \, Suz, \, Fi_{22}, \, Co_1\}.$$

Now, by comparing \cite[Theorem 8.2~(Table 3)]{VRsurvey} with \cite[Theorem 6.9. Condition II]{VRsurvey}, we can deduce that any of the above groups being an $E_{\pi}$-group for a set of odd primes, is also a $D_{\pi}$-group. So this case is not possible.
\end{proof}

\begin{lem}\label{alternating}
$N$ is not an alternating group of degree $n \geq 5$.
\end{lem}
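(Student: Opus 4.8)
The plan is to derive a contradiction from assuming that the unique minimal normal subgroup $N$ is an alternating group $A_n$ with $n\ge 5$, using the strong structural constraints accumulated in Proposition~\ref{mincount}. By part (8) we know $|\pi(N)|\ge 5$, which immediately rules out all the small alternating groups whose order has few prime divisors; since $\pi(A_n)=\pi(n!)$, we need $n$ large enough that $n!$ involves at least five primes, so $n\ge 11$ or so. The key leverage comes from part~(2): $G=AB$ with $B$ a $\pi'$-group and $G/N$ a soluble $\pi'$-group, together with the fact (part~(4)) that $A_\pi\le N$ is a Hall $\pi$-subgroup of $N$ which, by part~(5), is \emph{not} nilpotent. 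Thus $A_n$ must possess a non-nilpotent Hall $\pi$-subgroup for a set $\pi$ of odd primes with $|\pi\cap\pi(N)|\ge 2$ and $|\pi'\cap\pi(N)|\ge 3$.

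First I would invoke the classification of factorizations of almost simple groups with socle $A_n$ from \cite{LPS} (Theorem~D and the associated tables), together with the known classification of which $A_n$ and $S_n$ satisfy $E_\pi$/$D_\pi$ for odd $\pi$ recorded in \cite{VRsurvey}. The crucial external fact I expect to use is that the Hall $\pi$-subgroups of symmetric and alternating groups are extremely restricted: for a set of odd primes, a theorem (essentially due to the analysis in \cite{VRsurvey}) shows that $A_n$ satisfies $E_\pi$ only in very constrained situations, and in almost all of these the Hall $\pi$-subgroup is nilpotent (indeed a $\pi$-group acting with small support), whence $D_\pi$ holds by Wielandt's Lemma~\ref{Wie}. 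Since part~(4) tells us $N$ satisfies $E_\pi$ but \emph{not} $D_\pi$, I would match the $E_\pi$-but-not-$D_\pi$ cases for $A_n$ against the admissible factorizations $G=AB$ coming from \cite{LPS}. The strategy is to show these two lists are incompatible: the factorizations available for almost simple groups with alternating socle force one factor to be (up to the structure in part~(1)) a known maximal subgroup — typically an intransitive stabilizer $(S_k\times S_{n-k})\cap N$, a transitive/imprimitive stabilizer, or an affine/primitive type subgroup — and for each such type I would check, using parts~(5), (6) and (7), that either $A_\pi$ turns out to be nilpotent (contradicting (5)) or $A$ fails the centralizer conditions of part~(6) or the non-solubility/solubility balance of parts~(7)--(8).

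Concretely, I would organize the argument by the type of the maximal factor. For an intransitive factor $H\cap N=(A_k\times A_{n-k})$-type subgroup, the Hall $\pi$-subgroup of $N$ sits inside a direct product of two smaller alternating/symmetric groups, and I would argue that its $\pi$-part decomposes compatibly so as to be nilpotent, or that the complementary factor $B$ (a $\pi'$-group) cannot supply the needed order by the index count $|N||A\cap B|=|G/N||N\cap A||N\cap B|$ of part~(1). For primitive or affine-type factors, the order constraints are even tighter and $|\pi(N)|\ge 5$ together with $\pi(G)=\pi(N)$ (part~(8)) typically eliminates them outright. The guiding principle throughout is that a non-nilpotent Hall $\pi$-subgroup of $A_n$ for odd $\pi$ essentially forces $\pi$ to contain primes whose contributions to $|A_n|$ interact — e.g. primes $r,s$ with $r\mid n!$ appearing to first power and lying in different ``blocks'' — and such configurations are precisely the ones classified as $D_\pi$ in \cite{VRsurvey}.

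The main obstacle I anticipate is the bookkeeping of exactly which $A_n$ are $E_\pi$ but not $D_\pi$ for odd $\pi$, and cross-referencing this precisely with the finite list of factorizations in \cite{LPS}. The delicate point is that Hall subgroups of alternating groups, unlike those of $S_n$, can fail conjugacy and dominance in subtle ways (the classical phenomenon being related to indices like $\binom{n}{k}$ being a $\pi$-number), so I must be careful to use the \emph{alternating} group data rather than the symmetric group data, and to track the effect of the outer factor $G/N\le\mathrm{Out}(A_n)$ (which for $n\ne 6$ is just $C_2$). I expect that in every surviving case either Lemma~\ref{Wie} applies to give $D_\pi$ for $N$ — contradicting part~(4) via Lemma~\ref{RV} — or the factorization is simply unavailable, and these two exhaustive possibilities close the argument.
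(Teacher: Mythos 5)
Your overall direction --- exploit Proposition~\ref{mincount}(4), which says that $N$ satisfies $E_{\pi}$ but not $D_{\pi}$ with $|\pi\cap\pi(N)|\ge 2$, and play this off against the known classification of Hall subgroups of alternating groups --- is the right one, and it is in fact the paper's argument. But your proposal stops short of the decisive fact and therefore leaves essentially all of the work undone. The classification you allude to is much stronger than your hedged version (``$A_n$ satisfies $E_\pi$ only in very constrained situations, and in almost all of these the Hall $\pi$-subgroup is nilpotent''): by \cite[Theorem 8.1]{VRsurvey} (going back to P.~Hall's analysis of Hall subgroups of symmetric groups), for a set of odd primes $\pi$ with $|\pi|>1$ \emph{no} alternating group is an $E_{\pi}$-group at all. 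Combined with 1(4), which gives $|\pi\cap\pi(N)|\ge 2$ and the existence of a Hall $\pi$-subgroup $A_\pi$ of $N$, this is an immediate contradiction; the paper's proof is this single citation.

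Because you never pin down the exact list of $E_{\pi}$-but-not-$D_{\pi}$ alternating groups for odd $\pi$ (the list is empty), your plan defers the conclusion to a cross-referencing of that unknown list against the factorizations in \cite{LPS}, a case analysis you neither carry out nor show to be finite and tractable. In particular the phrase ``in almost all of these'' leaves residual cases that your proposed tools (parts (5)--(8), Lemma~\ref{Wie}, the LPS tables) are not shown to close. The entire apparatus of factorization types (intransitive, imprimitive, primitive/affine), the bound $n\ge 11$ from $|\pi(N)|\ge 5$, and the worry about $\binom{n}{k}$ being a $\pi$-number are all unnecessary once the correct statement of the Hall-subgroup classification for $A_n$ is used. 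As written, the proposal is a programme with a genuine gap rather than a proof; the gap is filled precisely by the nonexistence statement above.
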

\begin{proof}
By \cite[Theorem 8.1]{VRsurvey},  if  $\pi$ is a set of odd primes with $|\pi| > 1$, then any alternating group is not an $E_{\pi}$-group. So this case can be ruled out.
\end{proof}
\begin{lem}\label{exceptional}
$N$ is not an exceptional group of Lie type.
\end{lem}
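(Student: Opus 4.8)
The plan is to combine the strong restrictions on factorizations of exceptional groups coming from \cite{LPS} with the structural data on the minimal counterexample recorded in Proposition~\ref{mincount}, and with the description of Hall $\pi$-subgroups of the exceptional groups in \cite{VRsurvey}. First I would check that $N=(A\cap N)(B\cap N)$ is a factorization of $N$ into two \emph{proper} subgroups. Indeed $B\cap N$ is proper because $B$ is a $\pi'$-group by (2), while $N$ is an $E_\pi$-group with $A_\pi\neq 1$; and $A\cap N$ is proper because $N\le A$ would embed the non-abelian simple group $N$ into $A_\pi$ or $A_{\pi'}$ (as $N$ is normal in the direct product $A=A_\pi\times A_{\pi'}$), forcing $N$ to be a $\pi$- or a $\pi'$-group, contrary to (4). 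Refining this to a maximal factorization and invoking the classification of maximal factorizations of the almost simple exceptional groups in \cite{LPS}, the socle $N$ is confined to a very short list: the high Lie rank groups $F_4(q)$, $E_6(q)$, ${}^2E_6(q)$, $E_7(q)$, $E_8(q)$, as well as ${}^3D_4(q)$, do not appear among the factorizable socles and are discarded at once.

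Next I would dispose of the rank-one twisted families, which do factorize (through their $2$-transitive actions, point stabilizer times a Singer-type torus) and hence survive the previous step. For $N={}^2B_2(q)$ or $N={}^2G_2(q)$ the maximal tori are cyclic and, the relative Weyl group being a $2$-group, the normalizer of a maximal torus contributes nothing to the odd part. Since $p\in\pi'$ by (9a) and $2\in\pi'$, every $\pi$-element is semisimple and a $\pi$-subgroup lies inside a single cyclic maximal torus; thus the Hall $\pi$-subgroup $A_\pi$ of $N$ would be abelian, contradicting the non-nilpotency of $A_\pi$ established in (5). The Tits group ${}^2F_4(2)'$, the only survivor of the ${}^2F_4$-family after the factorization step, is then treated exactly as the sporadic groups in Lemma~\ref{sporadic}: comparing the relevant tables of \cite{VRsurvey} shows that if it is an $E_\pi$-group for a set of odd primes it is already a $D_\pi$-group, contradicting (4).

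This leaves $N=G_2(q)$ as the essential case, which I expect to be the main obstacle. Here $N$ has Lie rank $2$, so by (9b) the $\pi'$-factor $B\cap N$ has order divisible by $p$; using the explicit factorizations of $G_2(q)$ in \cite{LPS} this forces $B\cap N$ to be a subgroup carrying a full complement of unipotent elements (a parabolic or a subsystem subgroup of type $A_2$ or ${}^2A_2$), while the complementary factor $A\cap N$ must contain the non-nilpotent Hall $\pi$-subgroup $A_\pi$ of $N$. I would then confront this with the description of the Hall $\pi$-subgroups of $G_2(q)$ in \cite{VRsurvey}: for a set of odd primes with $p\notin\pi$ these are governed by a maximal torus and its normalizer, and I would argue that either $A_\pi$ is forced to be nilpotent, contradicting (5), or the arithmetic of $\pi$ (through (6), (8) and the self-centralizing $p'$-element criterion (9c)) shows that $G_2(q)$ is actually a $D_\pi$-group, contradicting that $N$ is not a $D_\pi$-group. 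The delicate point, requiring the most care, is that the only odd prime available to make $A_\pi$ non-nilpotent must come from the Weyl group $W(G_2)$ of order $12$, hence is the prime $3$; matching this against the primes carried by the $\pi'$-factor so as to make the factorization incompatible with the simultaneous failure of nilpotency and of $D_\pi$ is the heart of the argument.
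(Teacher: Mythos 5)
There is a genuine gap, and it is concentrated in the two cases that actually require work. Your opening reduction via \cite[Theorem B]{LPS} is the right move, but you misquote its conclusion: the list of factorizable exceptional socles is \emph{not} empty of high-rank groups. It consists precisely of $G_2(q)$ with $q=3^c$, $F_4(q)$ with $q=2^c$, and $G_2(4)$. In particular $F_4(q)$, $q=2^c$, which you ``discard at once'' as non-factorizable, is one of the three surviving cases, owing to the factorization with factors $Sp_8(q)$ and ${}^3D_4(q)$ (or ${}^3D_4(q).3$). The paper must, and does, treat it separately: since $2$ divides $(|A\cap N|,|B\cap N|)$ and each of $Sp_8(q)$, ${}^3D_4(q)$ has a Sylow $2$-subgroup containing its own centralizer, the $\pi$-decomposability of $A$ forces $A$, and hence $N$, to be a $\pi'$-group, contradicting Proposition~\ref{mincount}(4). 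Your proposal has no argument at all for this case. Conversely, the rank-one twisted families and ${}^2F_4(2)'$ that you spend a paragraph on do not occur (your claim that the $2$-transitive action yields a ``point stabilizer times Singer torus'' factorization is false -- the relevant torus is too small to be transitive -- which is exactly why these groups are absent from the LPS list), so that part of your argument, while harmless, is addressed to a vacuous case.

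For $G_2(q)$ your proposal is an extended sketch whose ``heart'' you explicitly defer, whereas the actual argument is a one-line citation: by \cite[Theorem 6.9, Condition II]{VRsurvey}, $G_2(q)$ is a $D_\pi$-group for any set of primes $\pi$ with $2,p\notin\pi$; since $N$ satisfies $E_\pi$ with $2,p\in\pi'$ but is not a $D_\pi$-group by Proposition~\ref{mincount}(4), this is already a contradiction, with no need to analyze the factorizations of $G_2(q)$, its parabolic subgroups, or the prime $3$ in the Weyl group. In summary: you would need to restore the $F_4(q)$ case entirely and replace the $G_2(q)$ discussion by (or complete it into) an actual proof; as written the proposal does not establish the lemma.
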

\begin{proof} By \cite[Theorem B]{LPS}, if $N$ is an exceptional group of Lie type, $N \unlhd G \leq \Aut(N)$ and $G$ is factorized, then $$N \in \{G_2(q),q=3^c;  \, F_4(q), q=2^c; G_2(4)\}.$$
If $N = G_2(q)$,  then $N$ is a $D_{\pi}$-group for any set of primes $\pi$ such that $2, p \not\in\pi$ by \cite[Theorem 6.9. Condition II]{VRsurvey}, a contradiction.

Let $N = F_4(q), q=2^c$. In this case all possible factorizations $G=AB$ (not only the maximal ones) with subgroups $A, B$ not containing $N$ are as follows: For $\{X, Y\}=\{A,B\}$, $X \cap N =Sp_8(q)$ and $Y \cap N \in \{{}^{3}D_4(q), ^{3}D_4(q).3 \}$ and $N=(A \cap N)(B \cap N)$. Since 2 divides $( |A \cap N|, |B \cap N|)$ and each of these subgroups has a Sylow $2$-subgroup containing its centralizer in the corresponding subgroup, it follows that $N$ is a $\pi'$-group, which is a contradiction.\end{proof}

\subsection{The almost simple case for classical groups of Lie type}
%
%
%
%
%
%
%

In the sequel, $G=AB$ will be a minimal counterexample to the Main Theorem, which is an almost simple group with socle $N$ (i.e. $N \leq G \leq \Aut(N)$), and $N$ is a classical simple group of Lie type over a field $GF(q)$ of prime characteristic $p$,  with $q=p^e$,  $e$ a positive integer. Information about the  structure of such $G$ is collected in Proposition~\ref{mincount}. In particular, recall that $G=AN=BN=AB$ and  $B$ is a $\pi'$-group.

For such a group $G$, we will use the knowledge about the non-trivial factorizations  $G=XY$, where $X$ and $Y$ are maximal subgroups of $G$ not containing $N$, which appears in \cite[Theorem A, Tables 1-4]{LPS}. In the referred tables and the corresponding proofs an explicit description of the ``large'' subgroups $X_0 \unlhd X \cap N$, $Y_0 \unlhd Y \cap N$  is given (in most cases, $X_0 = X \cap N$ and $Y_0 = Y \cap N$).  For any appearence of a pair $(X,Y)$, we will distinguish two subcases: either $A\leq X, B\leq Y$, or   $A\leq Y, B \leq X$.

Note that if we  assume, for instance,  that $A\leq X, B\leq Y$, then the following facts hold:

- $G=AY=BX$.

- $G=XN=YN$.

- $\pi\subseteq \pi(X)$, and $A_{\pi}$ is a Hall $\pi$-subgroup of $X$.

- $|G:Y|=|N: N \cap Y|$ divides $|A|$.

- $|G:X|=|N:N \cap X|$ divides $|B|$.

- $X=A(B \cap X)$, $Y=B(A \cap Y)$, and both $X$, $Y$ satisfy $D_{\pi}$.

\medskip

We adher also to the notation in \cite{LPS} for the almost simple groups and its subgroups. In particular, if $\hat{N}$ is a classical linear group on the vector space $V$ with centre $Z$ (so that $N=\hat{N}/Z$ is a classical simple group) and $\hat{N} \unlhd \hat{G} \leq GL(V)$, for any subgroup $X$ of $\hat{G}$ we will denote by $\, \hat{}X$ the subgroup $(XZ \cap \hat{N})/Z$ of $N$. Also we will use the notation $P_i$, $1 \leq i \leq m$, $N_{i}$, $N_{i}^{\epsilon}$ $(\epsilon=\pm)$, for stabilizers of subspaces as described in \cite[2.2.4]{LPS}.

We recall that  for a prime $p$ and $q=p^e$, $e\ge 1$,  we denote by $q_n$ \emph{any} primitive  prime divisor of $p^{en}-1$, i.e. primitive with respect to the pair $(p, ne)$.

We will use frequently, without further reference,  the fact that if $N$ is a simple group of Lie type over $GF(q)$, $q = p^e$,  $n \geq 3$ and $(q,n) \neq (2,6)$, then $q_n$   does not divide $|Out(N)|$ (see \cite[2.4 Proposition B]{LPS}). Also the information about elements whose order is a primitive prime divisor appearing in Lemma~\ref{primitive} will be used eventually without reference.

\smallskip
In the proof of the next lemmas we will refer to Proposition~\ref{mincount}(i) just as 1(i).

\begin{lem}\label{linear} $N$ is not isomorphic to $L_{m}(q)$.
\end{lem}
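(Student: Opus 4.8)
The plan is to assume $N\cong L_m(q)=PSL_m(q)$ and reach a contradiction, playing the maximal factorizations of almost simple groups with socle $L_m(q)$ listed in \cite[Theorem A, Table 1]{LPS} against the arithmetic of the primitive prime divisors $r=q_m$ and $s=q_{m-1}$ furnished by Lemma~\ref{primitive}. I would first reduce to large rank. If $m=2$, then by the subgroup structure of $L_2(q)$ every odd-order $p'$-subgroup is cyclic; since $p\in\pi'$ by 1(9)a, the Hall $\pi$-subgroup $A_\pi$ of $N$ is such a group and hence nilpotent, contradicting 1(5). The case $m=3$ is not covered by Lemma~\ref{primitive}, so I would treat it by hand: the factorizations of $L_3(q)$ in \cite{LPS} are few, and using 1(4) (so $|\pi\cap\pi(N)|\ge2$ and $|\pi'\cap\pi(N)|\ge2$), together with 1(7) and 1(9), one checks directly that none is compatible with $A_\pi$ being a \emph{non-nilpotent} Hall $\pi$-subgroup while $B$ is a $\pi'$-group. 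From here on assume $m\ge4$ (the finitely many Zsigmondy exceptions $L_6(2)$, $L_7(2)$ excluded in Table~1 being handled separately), so that Lemma~\ref{primitive} supplies primes $r=q_m$, $s=q_{m-1}$ in $\pi(N)\setminus\pi(G/N)$ and abelian $p'$-tori $T_1,T_2$ with $C_N(a)\le T_1$, $C_N(b)\le T_2$ for elements $a,b$ of orders $r,s$ (the exceptional case $(\star)$ does not occur for linear groups).

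The engine of the argument is fact 1(9)c. If $r\in\pi$, then $A_\pi$, being a Hall $\pi$-subgroup of $N$, contains an element $a\in A\cap N$ of order $r$, and $C_{G\cap N}(a)=C_N(a)\le T_1$ is an abelian $p'$-group; hence 1(9)c (taking $X=G$) forces $A$ to be soluble, $A\cap N$ to be a $p'$-group and $|A|_p\le q$. The identical conclusion follows if $s\in\pi$, using $b$ and $T_2$. In either case 1(7) makes $B$ non-soluble, and since $N=(A\cap N)(B\cap N)$ with $A\cap N$ a $p'$-group while $p\in\pi(B\cap N)$ (recall $p\in\pi'$ by 1(9)a and, as $m\ge3$, $p\in\pi(B\cap N)$ by 1(9)b), the factor $B\cap N$ must contain a full Sylow $p$-subgroup of $N$. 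Thus $A\cap N$ is a soluble $p'$-subgroup of $L_m(q)$ whose Hall $\pi$-part $A_\pi$ is non-nilpotent and contains at least two primes.

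This is a very restrictive configuration, and the next step is to match it against the rows of \cite[Table~1]{LPS}. A soluble $p'$-factor of $L_m(q)$ is confined, by the Aschbacher-type description in \cite{LPS} and the soluble-factor data in \cite{LiX} and \cite{Ka2}, to a Singer-type $\mathcal C_3$-normalizer $\hat{}\,\Gamma L_1(q^m)$ (or a closely related imprimitive $\mathcal C_2$-subgroup), so that $A\le X$ with $X\cap N$ of that shape, while $B\le Y$ with $Y\cap N$ a complementary factor of $p$-divisible order. I would then eliminate each such row by combining the divisibility relations $|G:Y|=|N:N\cap Y|\mid|A|$ and $|G:X|=|N:N\cap X|\mid|B|$ with the location of $r,s,p$ and the bound $|A|_p\le q$ from Lemma~\ref{outp}: in every case either the $\pi$-part available to $A_\pi$ inside $X\cap N$ collapses to a cyclic (hence nilpotent) group, contradicting 1(5), or $B$ cannot simultaneously be a $\pi'$-group and absorb the required Sylow $p$-subgroup.

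The complementary possibility is $r,s\in\pi'$. Here 1(9)c no longer applies, and a parallel but more delicate sweep of the same table is needed: since $A_\pi$ is a Hall $\pi$-subgroup of the factor $X$ containing $A$, and $\pi\cap\pi(N)$ now avoids $p$, $r$ and $s$, one shows that in each admissible row the factor $X\cap N$ admits no non-nilpotent Hall $\pi$-subgroup once those three primes are removed, again contradicting 1(5), while the complementary factor is forced to carry $r$ or $s$ against the constraint that $B$ is a $\pi'$-group. The main obstacle is precisely this row-by-row verification for $m\ge4$: crude order and index comparisons dispose at once of the parabolic ($P_1$, $P_{m-1}$) and imprimitive cases, but the genuinely delicate rows are those with a factor of classical type $Sp_m(q)$ ($m$ even), $GU_m(q^{1/2})$ or a subfield $\mathcal C_5$-subgroup, since these have $p$-divisible order and can contain elements of order $r$ or $s$; for them the contradiction must be extracted from the finer interplay of the non-nilpotency of $A_\pi$, the abelianness of the tori $T_1,T_2$, and the requirement $p\in\pi(B\cap N)$, rather than from orders alone.
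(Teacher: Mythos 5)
Your overall strategy (LPS factorizations plus primitive prime divisors fed into 1(9)c) is the paper's, and your Dickson-type argument for $m=2$ (every odd-order $p'$-subgroup of $L_2(q)$ is cyclic, so $A_\pi$ would be nilpotent against 1(5)) is a genuinely slicker shortcut than the paper's table check. But the core of your argument for $m\ge 4$ has a structural flaw. You split into ``$r\in\pi$ or $s\in\pi$'' versus ``$r,s\in\pi'$'' and announce that the second branch needs ``a parallel but more delicate sweep.'' That is not how the mechanism works, and the second branch as you set it up is both vacuous and unproved. The correct engine is: for each row of \cite[Table 1]{LPS} and each assignment ($A\le X$ or $A\le Y$), the index of the factor containing $B$ divides $|A|$ and is divisible by $q^m-1$ (resp.\ $q^{m-1}-1$); since $q_m$, $q_{m-1}$ do not divide $|Out(N)|$, this puts $r=q_m$ (resp.\ $s=q_{m-1}$) into $\pi(A\cap N)$ \emph{unconditionally}. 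Then Lemma~\ref{primitive} gives an element $a\in A\cap N$ of that order with $C_N(a)$ an abelian $p'$-torus, and 1(9)c \emph{concludes} $r\in\pi$ (its proof being exactly that $r\in\pi'$ would force $A_\pi\le C_N(a)$ abelian, against 1(5)). So there is no residual case $r,s\in\pi'$ to sweep; by leaving your main case as a promissory note (``one shows that in each admissible row\dots'') you have deferred precisely the part of the argument that does the work.

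Two further concrete gaps. First, in the branch where $A$ is soluble you assert $N=(A\cap N)(B\cap N)$ to conclude that $B\cap N$ carries a full Sylow $p$-subgroup; that factorization of $N$ is not available from 1(1), and it is not needed: the paper kills the subcase $A\le Y$ ($Y$ parabolic-side) by the bare numerical clash $|G:X|_p\ge q^2$ dividing $|A|$ against $|A|_p\le q$ from 1(9)c and Lemma~\ref{outp}, and kills the subcase $A\le X$ by quoting \cite[Lemma 2.5]{ACK} (a soluble subgroup of $L_m(q)$ of order divisible by $q_m$ has order dividing $m(q^m-1)$, up to one family eliminated via \cite[2.4 Proposition D]{LPS}) together with \cite[Theorem 1.1]{LiX} to force $A\cap N$ into the Singer normalizer, whence $A_\pi$ is abelian against 1(5). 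Your ``confined to a Singer-type normalizer'' gestures at this but cites no bound that actually confines it. Second, your $m=3$ case is also only asserted; the paper's point there is that $p\in\pi(B\cap N)$ forces $A$ into the Singer-normalizer factor of order dividing $3(q^3-1)/(q-1)$, and again $A_\pi$ lands inside a cyclic Singer subgroup. As written, the proposal is a correct plan with the decisive verifications missing.
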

\begin{proof}
We do not consider here the cases when $L_m(q)$ is isomorphic to an alternating group, which  have been discarded in Lemma~\ref{alternating}.

Recall that $|\pi(N)| \geq 5$ by 1(8).

\smallskip

 We analyze first the cases $m \leq 3$. If $N=L_2(q)$, all possible factorizations appearing in \cite[Tables 1-3]{LPS} can be ruled out, except when $q=29$ or $q=59$,  because either $|\pi(N)| \leq 4$ or both $X$ and $Y$ are soluble. When $q \in \{29, 59\}$, there exists also a factorization $G=XY$  with $X \cap N=P_1$, $Y \cap N = A_{5}$, the alternating group of degree $5$.  In both cases it should be $q \in \pi'$ by 1(9), $\{3, 5 \} \subseteq \pi$  and $A \leq A_{5}$. But the fact that $A_5$ do not have Hall $\{3,5\}$-subgroups leads to a contradicton.

 Consider now  $N = L_3(q)$, so $|N|=(1/(q-1,3))q^3(q^3-1)(q^2-1)$ and $|Out(N)|= 2(q-1,3)e$. The cases when $q \leq 8$ are excluded by 1(8). From \cite{LPS}  we know that all factorizations $G=XY$ satisfy that for one of the factors, say $X$,  $|N \cap X|$ divides $\frac{q^3-1}{q-1} \cdot 3$, which is  not divisible by $p$ if $p \neq 3$. Since $p \in \pi(N \cap B)$ by 1(9) this forces that $A \leq X$ when $p\neq 3$. For the case $p=3$, we get that $| N : N \cap Y|_3 \leq q/3 < q^2$, so $C_G(N \cap Y)$ is a $p$-group because of \cite[Lemma 6]{paper4}, and this also implies that $A \leq X$. Since $q^3-1$ divides $|G:Y|$ we get that $r=q_3 \in \pi(A)$. But from the known structure of $X \cap N $ (this group is the normalizer of a Singer cycle of order $(q^3-1)/(q-1)$), it is clear that an element of order $3$ cannot centralize an element of order dividing $(q^3-1)/(q-1)$. This means that $A_{\pi} \leq A \cap N$ should be contained in a Singer cycle, and so it is abelian, which contradicts 1(5).
 \medskip

Assume from now on that $m \geq 4$. In this case both $q_m$ and $q_{m-1}$ exist, except when $(m,q)=(6,2)$ or $(7,2)$.
\smallskip

Take first $N=L_6(2)$, so $|N|=2^{15}\cdot 3^4 \cdot 5 \cdot 7^2 \cdot 31$ and $|Out(N)|=2$. This group has a Sylow $31$-subgroup which is self-centralizing in $G$. This forces that $31 \in  \pi(B) \subseteq \pi'$. Hence, according to \cite[Table 1]{LPS}, it should be  $B \leq Y$ with $Y \cap N \in \{ P_1, P_5 \}$, and $A \leq X$, with $X \cap N \in \{ \, \hat{}GL_{a}(2^b), ab=6, b \mbox{ prime}; PSp_6(2) \}$. In all cases $2^6-1=7 \cdot 3^2$ divides $|A|$. The cases $X \cap N=\hat{}GL_{3}(2^2).2$ and $X \cap N=PSp_6(2)$ can be ruled out by applying that a Sylow 7-subgroup of such a group is self-centralizing and $7 \in \pi(A) \cap \pi(B) \subseteq \pi'$, which forces that $A$ is a $\pi'$-group. If $X \cap N= \hat{}GL_{2}(2^3).3$, by order arguments it should be $\{3, 5 \} \subseteq \pi(B)$ and so $\pi=\{7\}$, a contradiction.

Take now $N=L_7(2)$, so $|N|=2^{21}\cdot 3^4 \cdot 5 \cdot 7^2 \cdot 31 \cdot 127$ and $|Out(N)|=2$. In this case $r=q_m=127$ and $N$ has a self-centraling Sylow $r$-subgroup, which forces that $r \in \pi(B)$. Then, by \cite[Table 1]{LPS}, it should be  $A \leq Y$ with $Y \cap N=P_1$ or $ P_6$, and $B \leq X$ with $X \cap N=\hat{}GL_{1}(2^7).7=127.7$, a contradiction with 1(9) because $p=2$ should divide $|N \cap B|$.
\medskip

So we may assume from now on that  both $q_m$ and $q_{m-1}$ exist. We analyze next the different factorizations $G=XY$ appearing in  \cite[Table 1]{LPS}.

\medskip

\noindent \textbf{Case  $X \cap N= \hat{}GL_a(q^b).b$} (with $m=ab$,  $b$ prime),  $Y \cap N= P_1$ or $P_{m-1}$.

Assume first that $A \leq X$ and $B \leq Y$. Then $r=q_m$ divides $|G:Y|$, and so $q_m\in \pi(A)$. If we take an element $a \in A \cap N$ of order $r$ it holds that $C_{N}(a)$ is contained in a torus of order $(q^{m}-1)/((m,q-1)(q-1))$. Hence, by 1(9), we deduce that $r\in \pi$, and $A$ is soluble. From  \cite[Lemma 2.5]{ACK} the order of a soluble subgroup of $N$ whose order is divisible by $r$ should divide  either $m(q^{m}-1)$,  or $2m^2(m+1)(q-1)l$ with  $q=p$, $r=m+1$, $m=2^{l}$. But in this last case, $r=m+1$ is the only primitive prime divisor of such group with respect to the pair $(q, m)$ and so, applying \cite[2.4 Proposition D]{LPS} it should be $(q, m) \in \{(2,4), (2,10), (2, 12), (2, 18), (3,4), (3, 6), (5,6 )\}$,  but this contradicts the fact that $m$ is a power of $2$ and $|\pi(G)| \geq 5$. Hence $A \cap N$ is a soluble subgroup of order dividing $m(q^{m}-1)$. Indeed, by \cite[Theorem 1.1]{LiX}, such a soluble group should be contained in $\hat{}GL_1(q^m).m$, the normalizer of a Singer cycle in $N$. Hence, from the known structure of this group,  we get that $A_\pi \leq A \cap N$ is contained in a Singer cycle and so it is abelian, which contradicts 1(5).

Let suppose now that $A \leq Y$, $B \leq X$. In this case $q^{m-1}-1$ divides $|G:X|$ and so $s=q_{m-1} \in \pi(A)$. If we consider an element $b$ of order $s$ in $A \cap N$ it holds by Lemma \ref{primitive} that $C_{N}(b)$ is contained in an abelian subgroup of order $(q^{m-1}-1)/(q-1,m)$. Therefore,  by 1(9), it holds that  $s\in \pi$, $A \cap N$ is a $p'$-group and $|A|_p \leq q$. But this is a contradiction since  $|G:X|_p \geq q^2$, and $|G:X|$ divides $|A|$.

\medskip
\noindent \textbf{Case $X \cap N=PSp_m(q).a$}, $ m=2k$ even, $m\geq 4$, $a \leq 2$. Here $Y \cap N=P_1, P_{m-1}$ or $Y \cap N=Stab(V_1\oplus V_{m-1})$ (and in this case $G$ contains a graph automorphism).

Assume first that $A\leq X$. Then $q^{m}-1$ divides $|G:Y|$, and so we deduce that $r=q_m\in \pi(A)$. Since $m=2k$ is even, the centralizer of an element of order $r$ in $X \cap N$ is an abelian group of order $\frac{q^k+1}{(2, q-1)}$, by Lemma~\ref{primitive}. By 1(9), we can deduce that   $r\in \pi$ and $A$ is  soluble. But by \cite[Lemma 2.8]{ACK} the order of a soluble subgroup of $PSp_m(q)$ whose order is divisible by $r$ should divide  either $2k(q^k+1)$,  or $16k^2(q-1)r log_2(2k)$ with  $q = p$,
$r =2k+1=m+1$, and $k$  a power of $2$. In the last case, it holds that $r$ is the only primitive prime divisor of $q^{m}-1$, and so by \cite[2.4 Proposition D]{LPS} we get a contradiction as above. Hence
$|A \cap N|$ should divide $2k(q^k+1)$, which leads to a contradiction by order arguments since  $q^{m}-1$ divides $|A \cap N|$.

Assume now that $B\leq X$ and  $A\leq Y$. Then
$q^{m-1}-1$ divides $|G:X|$ and so  $|A|$. This means that $s=q_{m-1}\in \pi(A)$ and  applying 1(9) and arguing as above we get a contradiction, because here also $|G:X|_p \geq q^2$.

\medskip

\noindent \textbf{Case  }$X \cap N=\hat{}GL_{m/2}(q^2).2$, $Y \cap N=Stab(V_1\oplus V_{m-1})$, with  $q\in\{2,4\}$, and $m \geq 4$ even (here $G$ contains a graph automorphism).

Assume first that $A\leq X$, $B \leq Y$. Clearly, $|G:Y|$ is divisible by $r=q_m$, so that $r\in \pi(A)$. The centralizer in $N$ of an element $a$ of order $r$ in $N\cap A$ is contained in a torus of order $(q^{m}-1)/((m,q-1)(q-1))$. Hence, by 1(9), we deduce that $r\in \pi$, $A \cap N$ is a $2'$-group and $ |A|_2 \leq q$, which is a contradiction again by order arguments.

If $A\leq Y$, $B \leq X$, then $s=q_{m-1}\in \pi(A)$ and arguing as above we get a contradiction.

\medskip

Finally, from \cite[Table 3]{LPS} for the case $m=5$, $q=2$ there exists also a factorization $G=XY$ with $X \cap N=31.5$ and $Y \cap N= P_2$ or $P_3$. Since a Sylow 31-subgroup of $N$ is self centralizing in $G$, it should be $31 \not\in \pi(A)$ and so $ A \leq Y$. But then $ N \cap B$ should be a $p'$-group, which contradicts 1(9).

The lemma is proved. \end{proof}

\begin{lem}\label{Un}
$N$ is not isomorphic to $U_m(q)$, $m\geq 3$.
\end{lem}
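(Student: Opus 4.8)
The plan is to mirror, for the unitary groups $U_m(q)$, the case analysis carried out in Lemma~\ref{linear} for the linear groups, proceeding through the factorizations $G=XY$ of the almost simple group $G$ with socle $N=U_m(q)$ that appear in \cite[Table 2]{LPS}. As in the linear case, the backbone of the argument is to locate in $A$ a primitive prime divisor element whose centralizer in $N$ is an abelian $p'$-torus, and then to invoke Proposition~\ref{mincount}(9)(c) to force $A$ to be soluble (indeed a $p'$-group of restricted order), contradicting either the non-nilpotency statement 1(5) or the order constraints $|G:X|_p\geq q^2$, $|\pi(N)|\geq 5$.

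First I would dispose of the small-rank and exceptional-isomorphism cases. The groups $U_m(q)$ with $|\pi(N)|\leq 4$ are removed by 1(8), and any $U_m(q)$ isomorphic to an alternating group is already handled by Lemma~\ref{alternating}; the low cases $U_3(q)$ and $U_4(q)$ require separate inspection, using the explicit torus orders and the self-centralizing Sylow subgroup argument (via 1(6) together with \cite[Lemma 6]{paper4}) to decide which factor contains $A$. I would also treat the sporadic small-field exceptions $(m,q)$ for which the relevant Zsigmondy primes $q_m$, $q_{2(m-1)}$, etc., fail to exist, exactly as the cases $(6,2)$ and $(7,2)$ were treated in Lemma~\ref{linear}: here one identifies a large primitive prime with a self-centralizing Sylow subgroup, forces it into the $\pi'$-group $B$, and derives a contradiction with 1(9) since $p$ must divide $|N\cap B|$.

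For the generic case I would assume both relevant primitive prime divisors exist and run through the families in \cite[Table 2]{LPS} for the reductive maximal factors of unitary type --- the parabolic stabilizers $P_i$ and $N_i^\epsilon$, the subfield/extension subgroups $\hat{}GU_a(q^b)$, the symplectic and orthogonal subgroups $PSp_m(q)$ and $\Omega_m^\epsilon(q)$ inside $U_m(q)$, and the low-dimensional exceptions. In each pairing $(X,Y)$ I distinguish the subcases $A\leq X$ and $A\leq Y$. When $A$ lies in the factor whose index is divisible by $q_m$ (or $q_{2(m-1)}$ depending on parity), I take an element $a\in A\cap N$ of that primitive order, read off from Lemma~\ref{primitive} and Table~1 that $C_N(a)$ is an abelian $p'$-torus, and apply 1(9)(c) to conclude $r\in\pi$ with $A$ soluble and $A\cap N$ a $p'$-group. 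Then I bound $|A\cap N|$ using the soluble-subgroup order estimates of \cite[Lemma 2.5, Lemma 2.8]{ACK} (and their unitary analogues), forcing $A_\pi$ into a Singer-type cyclic torus and hence abelian, contradicting 1(5); when instead $A$ lies in the factor of $p$-index at least $q^2$, the bound $|A|_p\leq q$ from 1(9)(c) gives the contradiction directly.

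The main obstacle I anticipate is the bookkeeping for the unitary tori: unlike the linear case, the relevant primitive prime divisor is $q_{2m}$ or $q_{2(m-1)}$ (arising from $q^m+1=q^{2m}-1$ type factors), so I must track parity of $m$ carefully to pin down which torus $T_1$ or $T_2$ of Table~1 contains the centralizer and to confirm it is an abelian $p'$-group before 1(9)(c) applies. A secondary difficulty is ensuring the soluble-subgroup order bounds from \cite{ACK} are available or adaptable for $U_m(q)$ and for excluding the sporadic ``only one primitive prime'' configurations via \cite[2.4 Proposition D]{LPS}, exactly as the power-of-$2$ exceptions were excluded in Lemma~\ref{linear}. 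Once these torus identifications and order estimates are in place, each individual factorization closes by the same short contradiction, so the proof is long but uniform.
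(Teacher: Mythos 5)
Your plan does not match the actual structure of the paper's argument, and it has a concrete gap at the central subcase. The paper does not sweep through a long list of torus computations for unitary factorizations, because by \cite[Theorem A, Corollary 2]{LPS} the unitary groups admit almost no factorizations at all: for $m$ odd the only factorizable socles are $U_3(3)$, $U_3(5)$, $U_3(8)$ and $U_9(2)$, of which only $U_9(2)$ survives 1(8), and that single case is killed by inspecting $J_3$ and $P_1$ directly. For $m$ even, $m=2k$, essentially every factorization has one factor $X$ with $X\cap N=N_1=U_{2k-1}(q)$. The decisive idea there --- which your proposal does not contain --- is that $N_1$ (being an odd-dimensional unitary group) is itself non-factorizable, so the induced factorization $X=A(X\cap B)$ of $X=N_G(N_1)$ forces either $X=AC_G(N_1)$ or $X=(X\cap B)C_G(N_1)$, whence $N_1$ would be covered by a $\pi$-decomposable or a $\pi'$-group, and one concludes as in \cite[Lemma 17]{paper4}.

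Your generic primitive-prime-divisor strategy would stall exactly in that subcase $A\leq X=N_G(N_1)$: the complementary factor is $Y$ with $Y\cap N=P_k$, so $|G:Y|=\prod_{i=1}^{k}(q^{2i-1}+1)$ is coprime to $p$, and the conclusion $|A|_p\leq q$ from 1(9)(c) yields no contradiction. You would then need soluble-subgroup order bounds for $U_m(q)$ in the style of \cite[Lemmas 2.5, 2.8]{ACK}; you yourself flag that you only hope for ``unitary analogues'' of these, and the paper never needs them. (The other direction, $A\leq Y$, is handled as you expect: $|G:X|=q^{m-1}(q^m-1)/(q+1)$ divides $|A|$, an element of order $q_m$ has centralizer in an abelian $p'$-torus, and $|A|_p\leq q$ contradicts $q^{m-1}\mid |A|$.) So the proposal is not a proof: it omits the non-factorizability of $\Aut(U_{2k-1}(q))$ and the reduction through $X=A(X\cap B)$, which is the step that actually closes the main case.
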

\begin{proof}
Assume that $N = U_{m}(q)$, $m \geq 3$. Recall that  $p \in \pi'$, by  1(9).

\medskip

\noindent\textbf{Case $m$ odd}. From  \cite[Theorem A, Corollary 2]{LPS}, the only groups $G$ such that $N \leq G \leq \Aut(N)$ which are factorizable appear for $N = U_3(3)$, $U_3(5)$, $U_3(8)$ or $U_9(2)$.
By 1(8), $|\pi(N)| \geq 5$. Thus the only possible case would be $N = U_9(2)$. Note that $|N|=2^{36}\cdot 3^{11}\cdot 5^2\cdot 7\cdot 11\cdot17\cdot 19\cdot 43$ and $|Out(N)|=6$.
From \cite[Table 3]{LPS}, there exists a unique factorization of this group such that $X \cap N=J_3$ and $Y \cap N= P_1$, where $P_1$ is a parabolic maximal subgroup  of $N$.

Assume first that $A \cap N \leq X \cap N= J_3$. Note that $|J_3|=2^7\cdot 3^5\cdot 5\cdot 17\cdot 19$. In this case,  $\{ 3, 5\} \subseteq \pi(B) \subseteq \pi'$, by order arguments. Hence $\pi \subseteq \{17, 19\}$. But $J_3$ has no subgroups of order $17\cdot 19$ (see \cite[p. 82]{Atlas}), so we get a contradiction. Therefore, we may assume that $A \cap N \leq Y\cap N=P_1$, a parabolic subgroup. In such case, $\{2, 3, 5, 17, 19\} \subseteq \pi' $, and a Hall $\pi$-subgroup of $N$  has  order dividing $7\cdot 11\cdot 43$. But  there are no subgroups of order  $7\cdot 11$ in $U_9(2)$ (see for example \cite{VV}), and any subgroup of order $43$ is self-centralizing, so this case cannot occur.

\medskip

\noindent\textbf{Case $m$ even}, $m=2k$, $k\geq2$. It follows from \cite[Tables 1, 3]{LPS} (and with the same notation)
that  one of the maximal subgroups in the factorization of $G$ with $N \leq G \leq \Aut(N)$, say $X$, has the property $X \cap N =N_1 = U_{2k-1}(q)$, unless $N = U_4(2)$ or $U_4(3)$. Since $|U_4(2)|=2^6\cdot 3^4\cdot 5$ and $|U_4(3)|=3^6\cdot 2^7\cdot 5\cdot 7$, these possibilities are excluded by 1(8).

Apart from some exceptional cases, that we check below, any subgroup $H$ such that $N_1\leq H\leq \Aut(N_1)$ has no proper factorization with factors not containing $N_1$.

Assume  that $A \leq X$, and so  $X=A(X \cap B)$. Now note that $X=N_G(N_1)$, so $X/C_{G}(N_1)$ is isomorphic to a subgroup of $\Aut(N_1)$ and then it has no proper factorizations.   If $N_1 \cong N_1C_{G}(N_1)/C_{G}(N_1)$ were contained either in the $\pi$-decomposable group $AC_{G}(N_1)/C_{G}(N_1)$ or in the $\pi'$-group $(X \cap B)C_{G}(N_1)/C_{G}(N_1)$, it would follow that $N_1 = X \cap N$ would be  a $\pi'$-group, a contradiction as $A\cap N\le N_1=X\cap N$. Hence it holds that either $X=AC_G(N_1)$ or $X=(X\cap B)C_G(N_1)$, and we can argue like in the proof of \cite[Lemma 17]{paper4} to get also a contradiction.

The exceptional cases, when $N = U_{2k}(q)$ and $X \cap N =N_1 =  U_{2k-1}(q)$ is factorized, appear for $N_{1}= U_3(3)$, $U_3(5)$, $U_3(8)$ or $U_9(2)$, by \cite[Table 3]{LPS}. The case $N = U_{4}(3)$  is excluded by 1(8) as mentioned above. Hence we should study the cases $N = U_{4}(5)$, $U_4(8)$ and $U_{10}(2)$.

For  $N = U_4(5)$ we have $|X \cap N |=|U_3(5)|=2^4\cdot 3^2\cdot 5^3\cdot 7$. If $A \leq X$, then by order arguments, it should be $\pi=\{7\}$, a contradiction.  By similar arguments, we get a contradiction for $N = U_3(8)$ when  $X \cap N=N_1 = U_3(8)$ and  $A \leq X$.

Suppose $N = U_{10}(2)$ and $A \leq X$, so that $A \cap N \leq N_1 = U_9(2)$. In this case, there exists an element $a \in A \cap N$ of order $r=19$ whose centralizer in $G$ is an abelian $2'$-group (recall that a field automorphism does not centralize any element of order $r$).  By 1(9), $A$ should be a $2'$-group. This leads to a contradiction by order arguments in all cases, except possibly when $Y \cap N=P_{5}$ is a parabolic subgroup. In this last case, $|G:Y|=\prod_{i=2}^{5}(q^{2i-1}+1)(q+1)$ for $q=2$ (see  \cite[3.3.3]{LPS}), and this number divides $|A|$. Then we can consider the prime $s=q_{10}=11 \in \pi(A) $ and an element of order $s$ in $A \cap N$ whose centralizer in $N$ is an abelian group. Since $s \in \pi(B) \subseteq \pi'$, this means that $A_{\pi}$ should be abelian, which contradicts 1(5).

\smallskip

Therefore we may assume in all cases that $A \leq Y$, $B \leq X$. It follows that  $|G:X|$ divides $|A|$.  By \cite[3.3.3]{LPS}, it holds that

\smallskip
\hspace{1.2cm}$|N:N \cap X|=|G:X|=q^{m-1}(q^{m}-1)/(q+1).$

\smallskip
\noindent Then there exists a maximal torus $T_1$ of order $(q^{m}-1)/((m, q+1)(q+1))$ in $N$ and an element $a\in A \cap N$ of order $r=q_{m}$, such that $C_{N}(a) \leq T_1$. Since $T_1$ is an abelian $p'$-group, it follows by 1(9)  that $A \cap N$ is a $p'$-group and $|A|_p \leq q$. But this is a contradiction, since $q^{m-1}$ divides $|A|$.

\end{proof}

\begin{lem}\label{Psp}  $N$ is not isomorphic to $PSp_{2m}(q)$, neither to $P\Omega_{2m+1}(q)$, $m \geq 2$.
\end{lem}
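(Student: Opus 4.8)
The plan is to run the same machinery used for $L_m(q)$ and $U_m(q)$ in Lemmas~\ref{linear} and \ref{Un}, now reading the relevant data off the $PSp_{2n}(q)$/$P\Omega_{2n+1}(q)$ rows of Table~1. First I would record the reductions. Since $Sp_{2m}(q)\cong\Omega_{2m+1}(q)$ for even $q$, the two families coincide in characteristic $2$ and can be treated simultaneously there, so it suffices to treat $PSp_{2m}(q)$ for all $q$ together with $P\Omega_{2m+1}(q)$ for odd $q$. By 1(9a) we have $p\in\pi'$, and since the Lie rank is $m\ge 2>1$, 1(9b) gives $p\in\pi(B\cap N)$. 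The configurations excluded in Table~1, namely $(n,q)=(4,2)$ and $(3,2)$, together with any case having $|\pi(N)|<5$, are discarded at once by 1(8). From Table~1 I keep in mind the two primes with good centralizers: $r=q_{2m}$, for which $C_N(a)$ is the cyclic abelian $p'$-torus $T_1$ of order $(q^m+1)/(2,q-1)$ for any $a$ of order $r$; and, when $m$ is odd, $s=q_m$, for which $C_N(b)\le T_2$ with $|T_2|=(q^m-1)/(2,q-1)$ abelian $p'$. For $m$ even one is in the case $(\star)$, where the $s$-element has centralizer containing a factor $Sp_2(q)=SL_2(q)$ and is useless; there only $r=q_{2m}$ survives.

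The core argument runs over the factorizations $G=XY$ with $X,Y$ maximal and not containing $N$ listed in \cite[Theorem A, Tables 1, 3]{LPS}, splitting each into the assignments $A\le X,\,B\le Y$ and $A\le Y,\,B\le X$. For the generic geometric factorizations the factors are (up to the graph automorphism) an orthogonal-type subgroup $X\cap N=O_{2m}^{\varepsilon}(q)$ and a parabolic $Y\cap N=P_1$, with $|N:O_{2m}^{\varepsilon}(q)|=q^m(q^m+\varepsilon)$ of $p$-part $q^m$, and $|N:P_1|=(q^{2m}-1)/(q-1)$ a $p'$-number. In each assignment I locate a primitive prime $r\in\{q_{2m},q_m\}$ dividing the index that divides $|A|$, so that $r\in\pi(A)$; choosing $a\in A\cap N$ of order $r$, Table~1 gives $C_N(a)$ abelian $p'$, and applying 1(9c) with ambient group $G$ (so $C_{G\cap N}(a)=C_N(a)$) yields $r\in\pi$, $A$ soluble, $A\cap N$ a $p'$-group and $|A|_p\le q$. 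The contradiction then comes in one of two ways, exactly as before: either the index dividing $|A|$ has $p$-part $q^m\ge q^2$, contradicting $|A|_p\le q$; or, when that index is a $p'$-number (the $P_1$ side, divisible by $r=q_{2m}$), the classification of soluble subgroups of $PSp_{2m}(q)$ of order divisible by $r$ in \cite[Lemma 2.8]{ACK}, together with \cite[Theorem 1.1]{LiX}, forces $A\cap N$ into the normalizer of the cyclic torus $T_1$, whence $A_\pi$ is abelian, contradicting 1(5).

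The delicate points, which I expect to be the main obstacle, are the case $(\star)$ and the assignment in which $A$ lies in the factor of large $p$-part. When $m$ is even only $r=q_{2m}$ has an abelian centralizer, and $q_{2m}$ divides the parabolic index $|N:P_1|$ but not the orthogonal index $q^m(q^m-1)$; so in the assignment $A\le P_1,\,B\le O_{2m}^-(q)$ the only primitive prime reached through $|N:O_{2m}^-(q)|$ is $q_m$, which Table~1 does not certify for even $m$. Here I would either establish directly, from the torus structure in \cite{VV,VV2}, that an element of order $q_m$ still has cyclic (hence abelian $p'$) centralizer, allowing 1(9c) and the $|A|_p\le q<q^m$ contradiction; or argue that, since $p\in\pi'$, the Hall $\pi$-subgroup $A_\pi$ embeds in the Levi complement $GL_1(q)\times Sp_{2m-2}(q)$ of $P_1$, which cannot house a full Hall $\pi$-subgroup of $N$ once $q_{2m}\in\pi$, the remaining possibility $q_{2m}\in\pi'$ being settled by the orthogonal-side prime. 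Note that, where available, selecting the factorization involving $O_{2m}^+(q)$ (whose index $q^m(q^m+1)$ is divisible by $r=q_{2m}$) removes the difficulty uniformly, since then $r$ is accessible on the large-$p$-part side for both parities.

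Finally I would dispose of the exceptional factorizations. The factorization $\Omega_{2m+1}(q)=G_2(q)\cdot P_1$ (and $G_2(q)\cdot N_1$) occurring for $m=3$ is handled as in Lemma~\ref{exceptional}: $G_2(q)$ is a $D_\pi$-group for $\pi$ avoiding $2$ and $p$ by \cite[Theorem 6.9, Condition II]{VRsurvey}, and combining this with the self-centralizing behaviour of the $q_6$-torus (via 1(6)) rules out both assignments. The residual small cases carrying a graph or graph-field automorphism, namely $Sp_4(q)$, $Sp_6(2)$, $Sp_4(4)$ and $P\Omega_7(3)$, are eliminated by direct inspection using $|\pi(N)|\ge 5$ (1(8)), the existence of a self-centralizing Sylow subgroup for a large prime (1(6)), and the non-nilpotence of $A_\pi$ (1(5)). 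The principal obstacle throughout remains the case $(\star)$ for $m$ even, where the loss of a usable second prime forces the finer centralizer analysis described above.
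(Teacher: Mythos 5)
Your proposal takes a completely different route from the paper, and as written it is not yet a proof. The paper disposes of this case in two lines, with no factorization analysis at all: by \cite[Theorem 8.8 (Table 7)]{VRsurvey} (equivalently Gross's description of odd-order Hall subgroups of $Sp_{2n}(q)$ in \cite[Theorem 4.3]{Gro1}) combined with \cite[Theorem 6.9, Condition V]{VRsurvey}, every group $N\in\{PSp_{2m}(q),\,P\Omega_{2m+1}(q)\}$ that satisfies $E_\pi$ for a set of primes $\pi$ with $2,p\notin\pi$ automatically satisfies $D_\pi$. Since 1(4) says $N$ satisfies $E_\pi$ but not $D_\pi$, and 1(9a) gives $p\in\pi'$ (with $2\in\pi'$ by hypothesis), the contradiction is immediate. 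This is the same mechanism the paper uses for the sporadic groups, for $M_{11}$, and for $G_2(q)$ in Lemma~\ref{exceptional}; recognizing that the symplectic and odd-dimensional orthogonal families admit such a classification is what makes the lemma short.

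Your alternative plan — running the $L_m(q)$/$U_m(q)$ machinery through the LPS factorizations — is not unreasonable in principle, but it has concrete unresolved gaps. First, the case $(\star)$: for $m$ even, Lemma~\ref{primitive} explicitly does \emph{not} certify that $C_N(a)\le T_1$ is abelian $p'$ for elements of order $r=q_{2m}$ (the exception in the ``Moreover'' clause covers both $r$- and $s$-elements in that row), so your appeal to 1(9c) there rests on a claim you acknowledge you would still have to ``establish directly from the torus structure''; the two workarounds you sketch are alternatives, not arguments. Second, your inventory of factorizations is incomplete: for $Sp_{2m}(q)$ with $q$ even, \cite[Table 1]{LPS} also contains $X\cap N=Sp_{2a}(q^b).b$ paired with $O_{2m}^{-}(q)$, the factorization $O_{2m}^{+}(q)\cdot O_{2m}^{-}(q)$, and several entries in Tables 2--3 beyond the $G_2(q)$ and small-rank cases you list; each assignment of $A$ and $B$ in each of these would need the same treatment, and you have not checked them. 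Until the $(\star)$ obstruction is actually resolved and the case list is exhaustive, the argument does not close. Given that the Revin--Vdovin/Gross route eliminates the entire family at once, I would recommend abandoning the factorization analysis here.
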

\begin{proof}
By comparing \cite[Theorem 8.8~(Table 7)]{VRsurvey} (see also \cite[Theorem 4.3]{Gro1}) with \cite[Theorem 6.9. Condition V]{VRsurvey}, we can deduce that if $N \in \{ PSp_{2m}(q), P\Omega_{2m+1}(q) \}$   is an $E_{\pi}$-group for a set of primes $\pi$ with $2, p \not\in \pi$, then it is also a $D_{\pi}$-group, so we get a contradiction.
\end{proof}

\begin{lem}\label{Omegaminus} $N$ is not isomorphic to $P\Omega_{2m}^-(q)$, $m\geq 4$, and $m\geq 5$ for $q=2$.

\end{lem}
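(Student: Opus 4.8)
The plan is to argue exactly as in Lemmas~\ref{linear} and~\ref{Un}: run through the maximal factorizations $G=XY$ of $G$ with socle $N=P\Omega_{2m}^-(q)$ recorded in \cite[Theorem A, Table 4]{LPS} (together with the small-field entries of \cite[Table 3]{LPS}), and play each off against the primitive prime divisor data of Lemma~\ref{primitive}. For this socle the relevant primes are $r=q_{2m}$ and $s=q_{2(m-1)}$, with associated maximal tori $T_1$ of order $(q^m+1)/(4,q^m+1)$ and $T_2$ of order $(q^{m-1}+1)(q-1)/(4,q^m+1)$; since this is not the exceptional case $(\star)$, any $a\in N$ of order $r$ satisfies $C_N(a)\le T_1$ and any $b\in N$ of order $s$ satisfies $C_N(b)\le T_2$, and both tori are abelian $p'$-groups. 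Throughout I may assume $(m,q)\neq(4,2)$, in accordance with both Lemma~\ref{primitive} and the hypothesis, and I recall from 1(8) that $|\pi(N)|\ge 5$ and $|\pi'\cap\pi(N)|\ge 3$, and from 1(9) that $p\in\pi'$ and, since $N$ has Lie rank greater than $1$, that $p\in\pi(B\cap N)$. Note also that $r$ divides neither $|\Omega_{2m-1}(q)|$ nor the order of any parabolic, because $r$ divides $q^k-1$ only when $2m\mid k$, whereas these subgroups involve only the factors $q^{2i}-1$ with $i<m$; the analogous statement holds for $s$ with $2(m-1)$ in place of $2m$.

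The backbone is a dichotomy applied to each pair $(X,Y)$, using $p\in\pi(B\cap N)$ to discard assignments in which the factor containing $B$ is a $p'$-group. Suppose $A\le X$ and $B\le Y$, so that $|G:Y|=|N:N\cap Y|$ divides $|A|$, and suppose a primitive prime $t\in\{r,s\}$ divides $|G:Y|$; taking an element of order $t$ in $A\cap N$ and applying Lemma~\ref{primitive} together with 1(9), we deduce that $A$ is soluble, $A\cap N$ is a $p'$-group and $|A|_p\le q$. There are then two mechanisms producing the contradiction. First, when $Y\cap N$ is of unitary type $GU_m(q)$ (for $m$ odd) or is a nonsingular point stabiliser $N_1\cong\Omega_{2m-1}(q)$, the index $|G:Y|_p$ is at least $q^2$ (indeed $|N:N_1|_p=q^{m-1}\ge q^3$ and $|N:GU_m|_p=q^{m(m-1)/2}\ge q^2$), which is incompatible with $|A|_p\le q$. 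Second, when $Y\cap N$ is a parabolic $P_1$, its index is coprime to $p$ and divisible by $r$, so this mechanism fails; instead, $A\cap N$ is then a soluble $p'$-subgroup of the complementary unitary-type factor whose order is divisible by $r$, and the bound for soluble subgroups in \cite{ACK} forces $A\cap N$ into the normaliser of a Singer/Coxeter torus, whence $A_\pi$ is abelian, contradicting 1(5).

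The main obstacle is the self-similar factorization in which one factor $X$ is the normaliser of a nonsingular point stabiliser, $X\cap N=N_1\cong\Omega_{2m-1}(q)$, together with the assignment $A\le X$: here $r$ divides $|G:X|$ rather than $|A|$, so the order count above does not apply, and $N_1$ is itself an orthogonal group which may be factorizable. I would treat this exactly as in the proof of Lemma~\ref{Un}: write $X=N_G(N_1)=A(X\cap B)$, so that $X/C_G(N_1)$ embeds in $\Aut(N_1)$. Since, by \cite[Theorem A]{LPS}, the odd-dimensional orthogonal group $\Omega_{2m-1}(q)$ admits no proper factorization with factors avoiding it — apart from a short list of small configurations excluded by $|\pi(N)|\ge 5$ and direct inspection — the simple section $N_1$ cannot be split between the $\pi$-decomposable image of $A$ and the $\pi'$-image of $X\cap B$; hence $N_1=X\cap N$ would be a $\pi'$-group, contradicting $1\neq A_\pi\le A\cap N\le N_1$. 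When instead $A$ lies in the complementary factor of this pair, we have $B\le X$ and $|G:X|=|N:N_1|$ divides $|A|$; as $r$ divides $|N:N_1|$ and $|N:N_1|_p=q^{m-1}\ge q^3$, the first mechanism of the previous paragraph yields the contradiction.

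Finally, the low-dimensional and small-field configurations set aside above — the exceptional factorizations of \cite[Table 3]{LPS}, and those whose large factor is of unitary- or $\Omega^{\pm}(q^2)$-type — are eliminated by the same two tools: either a primitive prime $r$ or $s$ is forced into $\pi(A)$ and, via Lemma~\ref{primitive} and 1(9), confines $A\cap N$ to an abelian torus, contradicting the non-nilpotency of $A_\pi$ from 1(5); or a direct order count, using $|\pi(N)|\ge 5$ and $|\pi'\cap\pi(N)|\ge 3$, leaves no admissible set $\pi$. The delicate point in carrying this out is purely the bookkeeping of the previous paragraphs: for each pair $(X,Y)$ one must verify that the correct primitive prime divides the index of the factor not containing $A$, and then decide which of the two contradiction mechanisms is available — the $p$-part estimate when that index has $p$-part exceeding $q$, and the Singer/Coxeter-torus argument (via \cite{ACK}) when the index is coprime to $p$, as happens for the parabolic factors.
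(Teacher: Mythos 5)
Your overall strategy (LPS factorizations plus primitive-prime/torus arguments via Lemma~\ref{primitive} and 1(9)) is the paper's, and your second paragraph reproduces the paper's two contradiction mechanisms correctly. But your third paragraph contains a genuine error. You handle the assignment ``$A$ inside the $N_1$-factor'' by importing the $\Aut(N_1)$-embedding argument from Lemma~\ref{Un}, which rests on the claim that $\Omega_{2m-1}(q)$ admits no proper factorization with factors not containing it, apart from a short list of small cases. That claim is false: odd-dimensional orthogonal groups are among the most factorizable classical groups. For $q$ odd, $\Omega_{2m-1}(q)$ has the factorization with factors $\Omega_{2m-2}^{-}(q)$ and a parabolic $P_{m-1}$ for every $m$, and $\Omega_7(q)$ has many more via $G_2(q)$; for $q$ even, $\Omega_{2m-1}(q)\cong Sp_{2m-2}(q)$ and one has $Sp_{2m-2}(q)=O^{+}_{2m-2}(q)\,O^{-}_{2m-2}(q)$. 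The analogous step in Lemma~\ref{Un} works precisely because \emph{odd-dimensional unitary} groups are (with finitely many exceptions) not factorizable by \cite[Corollary 2]{LPS}; that fact does not transfer to the orthogonal case, so your argument for the ``main obstacle'' collapses. Relatedly, your preliminary remark that the analogue for $s=q_{2(m-1)}$ of the divisibility statement holds is also wrong: $q^{2(m-1)}-1$ divides $|\Omega_{2m-1}(q)|$ (take $i=m-1$ in $\prod_i(q^{2i}-1)$) and divides the order of $P_1$, so $s$ does divide those subgroup orders.

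The case is nevertheless coverable, and in fact by your own ``first mechanism'': in every Table~1 factorization of $P\Omega_{2m}^{-}(q)$ the factor complementary to $N_1$ (or $P_1$) is $\hat{}GU_m(q)$ with $m$ odd, or $\Omega_m^{-}(q^2).2$ with $q\in\{2,4\}$, or $GU_m(4)$ paired with $N_2^{+}$; in each case $s=q_{2(m-1)}$ divides the index of that complementary factor $X$ (e.g.\ because $m$ odd makes the relevant factor of $|GU_m(q)|$ equal to $q^{m-1}-1$ rather than $q^{m-1}+1$), so $s\in\pi(A)$ when $A$ lies in the $N_1$-side, and the torus $T_2$ of Lemma~\ref{primitive} together with 1(9) forces $|A|_p\le q$ against $|G:X|_p\ge q^2$. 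This is exactly what the paper does; you declared the order count inapplicable there and substituted a false non-factorizability claim instead. You should also treat explicitly the $P\Omega_{10}^{-}(2)$ factorization with $X\cap N=A_{12}$ (the paper rules it out because $A_{12}$ has no odd-order Hall $\pi$-subgroups with $|\pi|\ge 2$, forcing $A\le Y$ and then a self-centralizing Sylow $17$-subgroup contradicts 1(6)); your catch-all final paragraph does not obviously cover it.
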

\begin{proof}  Recall that $P\Omega_{8}^-(2) \cong L_2(q^2)$, so this case has been analyzed in Lemma~\ref{linear}. We consider first the factorizations appearing in  Table 1  in \cite{LPS} (with the same notation).  Here $q_{2m}$ always exists.

\medskip

\noindent \textbf{Case $X \cap N=\hat{}GU_m(q)$}, $m$ odd. In such case,  $Y \cap N=P_1$, or  $Y \cap N=N_1$.

Suppose first that $A\leq X$, so that $\pi(| G:Y|) \subseteq \pi(A)$ and $r=q_{2m} \in \pi(A)$. If $a$ is an element of order $r$ in $N \cap A$, then its centralizer in $X \cap N$ is contained in a torus of order dividing $(q^m+1)$, and so $r \in \pi$, $A$ is soluble and $A \cap N$ is a $p'$-group,  by 1(9).  If $Y \cap N=N_1$, then $|G:Y|=(1/(2, q-1))q^{m-1}(q^m+1)$ by \cite[3.5]{LPS} and this gives a contradiction by order arguments.
If $Y \cap N=P_1$, then $(q^{m-1}-1)(q^m+1)$ divides $|G:Y|$, and so $|A|$. In  particular, $t=q_{m-1} \in \pi(A \cap N)$ (this prime exists if $(m, q) \neq (7,2)$, since $m \geq 4$). Then by 1(7) $N$ would have a soluble maximal subgroup of order divisible by $t$ and $r$,  so they should divide  $2m(q^m+1)$ by \cite[Lemma 2.8]{ACK}. This can only happen if $m=t$ prime and $q \in \{2, 3, 5\}$ (by \cite[2.4, Proposition D]{LPS}), but this contradicts the fact that $A \cap N$ is soluble and its order also divides $2m(q^m+1)$, which is not the case. In the case when
$(m, q) = (7,2)$ we get also a contradiction because $G \leq N.2$ and the order of a maximal soluble subgroup of $N$ divisible by $r=51$ is not divisible by $q^{m-1}-1=7\cdot 9$.

Hence, we may assume that $A\leq Y$. This means that $s=q_{2m-2}\in \pi(A)$ (recall that $m$ is odd). In this case there is a torus $T \leq  N$ of order $(q^{m-1}+1)(q-1)/(4, q^m+1)$ containing the centralizer in $N$ of an element $a\in A \cap N$ of order $s$. This is again a contradiction, by 1(9).

\medskip
\noindent \textbf{Case $X \cap N=\Omega_m^{-}(q^2).2$}, $Y \cap N=N_1$.
Here $G=\Aut(N)=XY$ and $q\in\{2,4\}$ (see \cite[3.5.1]{LPS}). Note that $|G:Y|=q^{m-1}(q^m+1)$.

If $q=2$, then $G=O_{2m}^-(2)=N.2$ and $X \cap N=\Omega_m^-(4).2$. Suppose first that $A\leq X$, $Y\leq B$. Since $|G:Y|$ divides $|A|$, it follows that $r=q_{2m}\in\pi(A)$. If  $a \in A \cap N$ is an element of order $r$, then $C_N(a)$ is contained in a torus of order $(q^m+1)$. This provides a contradiction by 1(9).

Now we may assume that $A\leq Y$, so that $s=q_{2m-2}\in \pi(A)$ (recall that $m \geq 5$ in this case). Let $a$ be an element of order $s$ in $A \cap N$. It follows that $C_N(a)$ is contained in a torus of order $(q^{m-1}+1)(q+1)/(4, q^m+1)$. This leads again to a contradiction by 1(9), because $|N \cap B|_2|G/N|_2 < |N|_2$.

If $q=4$, then $G=N.4$ and $X \cap N=\Omega_m^-(16).2$.  Suppose first that $A\leq X$, and so $|G:Y|$ divides $|A|$. Then $r=q_{2m}\in\pi(A)$. But if $a$ is an element of order $r$ in $N\cap A$, then $C_N(a)$ is contained in a torus of order $(q^m+1)$, a contradiction by 1(9).

Now we may assume that $A\leq Y$. So that $s=q_{2m-2}\in \pi(A)$. Let $a$ be an element of order $s$ in $A \cap N$. It follows that $C_N(a)$ is contained in a torus of order $(q^{m-1}+1)(q+1)$. Then $s \in \pi$ and so $A \cap N$ is of odd order,  by 1(9), which is not the case.

\medskip

\noindent \textbf{Case }$X \cap N=GU_m(4), Y \cap N=N_2^+$. Here $G=\Aut(N)=N.4$,  and $m$ is odd.

In this case,  by \cite[3.5.2(c)]{LPS}, for $q=4$, it holds:
\smallskip

\hspace{2.5cm}$|G:Y|=(1/2)\frac{q^{2m-2}(q^m+1)(q^{m-1}-1)}{(q-1)}.$

\smallskip
If $A\leq X$, then $r=q_{2m} \in \pi(A)$. Then there exists an element $a \in A \cap N$ of order $r$ such that $C_{N}(A)$ is contained in a torus of order $q^{m}+1$. Then,  by 1(9), $A \cap N$ is of odd order and $|A|_2 \leq 4$, which is a contradiction since $|G:Y|$ divides $|A|$.

Therefore $A\leq Y$ and  $s=q_{2m-2}\in \pi(A)$. Since $N$ contains a maximal torus of order $(q^{m-1}+1)(q-1)$, which is the centralizer of an element $a\in A\cap N$ of order $s$, this means that $s\in \pi$ and $A \cap N$ is of odd order. In fact, there is no field automorphism centralizing an element of order $s$, hence $C_{G}(a)$, and so $A$, is of odd order. But this is again a contradiction, because $|G/N|=4$ in this case.

\medskip

It remains to consider the factorization for the case $N=P\Omega^{-}_{10}(2)$ which appears in \cite[Table 3]{LPS}. In this case $X \cap N=A_{12}$ and $Y \cap N=P_1$. Since the alternating group $A_{12}$ does not contain Hall $\pi$-subgroups with $ 2 \not\in \pi$ and $|\pi| \geq 2$, it should be  $A \leq Y$, and $B \leq X$. In this case $17 \in \pi(A)$, and  $G$ has a self-centralizing Sylow $17$-subgroup, which contradicts 1(6).

The lemma is proved.
\end{proof}

\begin{lem}\label{Omegaplus}  $N$ is not isomorphic to $P\Omega_{2m}^+(q), m\geq 4$.
\end{lem}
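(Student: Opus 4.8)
The plan is to parallel the strategy used successfully for the other classical families (Lemmas~\ref{linear}--\ref{Omegaminus}), treating $N=P\Omega_{2m}^+(q)$ as the final case in the minimal-counterexample analysis. I would begin by invoking \cite[Table 1]{LPS} to enumerate all nontrivial factorizations $G=XY$ with $X,Y$ maximal subgroups of $G$ not containing $N$, plus the sporadic factorizations in \cite[Table 3]{LPS} for small $q$. For each pair $(X,Y)$, and for each of the two subcases $A\le X,\,B\le Y$ and $A\le Y,\,B\le X$, the goal is to derive a contradiction with one of the structural constraints from Proposition~\ref{mincount}. The primary tools, just as before, are the primitive prime divisors $q_{2(m-1)}$ (playing the role of $r$) and $q_{m-1}$ or $q_n$ (playing the role of $s$) supplied by Lemma~\ref{primitive} and Table~1, together with part 1(9)(c): whenever a primitive-prime-divisor element $a\in A\cap N$ has $C_N(a)$ (or $C_G(a)$) an abelian $p'$-group, one forces $A\cap N$ to be a $p'$-group with $|A|_p\le q$, which typically collides with a large power of $p$ dividing $|G:X|$ or $|G:Y|$.

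Concretely, I would handle each factorization type in turn. For the generic parabolic and reducible-stabilizer cases (where $\{Y\cap N\}$ is $P_1$, $N_1^\pm$, $N_2^\pm$, or $\mathrm{Stab}(V_1\oplus V_{2m-1})$), one reads off from \cite[3.6]{LPS} the index $|G:X|$ or $|G:Y|$; in one subcase a primitive prime $r=q_{2(m-1)}$ must divide $|A|$ and its centralizer lies in the torus $T_1$ of Table~1 (of order $(q^{m-1}+1)(q+1)/(4,q^n-1)$, an abelian $p'$-group), forcing $A\cap N$ to be a $p'$-group by 1(9)(c) and contradicting the power $q^{k}$, $k\ge 2$, dividing the relevant index. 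In the opposite subcase the prime $s=q_{m-1}$ (for $m$ even) or $s=q_n$ (for $m$ odd) forces the analogous conclusion. The subfield/twisted-type factors $X\cap N=\hat{}GU_m(q)$ or $\Omega_m^{\pm}(q^2).2$, which occur only for $q\in\{2,4\}$, are dispatched by the same torus arguments, using that there is no field or graph-field automorphism centralizing an element of order $r$ or $s$ (the clause of Lemma~\ref{primitive}, with the triality exception for $P\Omega_8^+(q)$ noted separately).

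The case requiring genuine care, and which I expect to be the main obstacle, is $N=P\Omega_8^+(q)$ with its triality automorphism. Here Lemma~\ref{primitive} explicitly flags that the triality automorphism \emph{can} centralize elements of order $r$ or $s$, so the clean ``no field/graph-field automorphism centralizes $a$'' step fails, and $C_G(a)$ need no longer be a $p'$-group even when $C_N(a)$ is. In this case I would treat the additional triality-type factorizations recorded in \cite[Table 3]{LPS} directly, using instead that $A_\pi$ is a Hall $\pi$-subgroup of $N$ which is non-nilpotent and non-abelian (1(4),(5)) together with the explicit subgroup structure: any candidate $A\cap N$ lands inside a subgroup whose Hall $\pi$-part is abelian (e.g. contained in a single maximal torus or a subfield group $\Omega_8^+(q_0)$ too small to contain a Hall $\pi$-subgroup of $N$), contradicting 1(5). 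I would also use 1(8) to eliminate the very small $q$ where $|\pi(N)|<5$, and 1(7) to rule out cases where both factors would be forced soluble. Assembling these contradictions across every row of \cite[Table 1]{LPS} and the exceptional factorizations completes the proof; the bookkeeping over the parity of $m$ and the two rows of Table~1 for this family (one for $m$ even, one for $m$ odd) is the routine but lengthy part.
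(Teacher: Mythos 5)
Your overall strategy for $m>4$ is exactly the paper's: run through the factorizations in \cite[Tables 1--3]{LPS}, extract a primitive prime divisor ($q_{2m-2}$, $q_{2m-4}$, $q_m$, $q_{m-1}$ or $q_k$ depending on the row and on which factor contains $A$), locate the centralizer of an element of that order inside an abelian $p'$-torus via Lemma~\ref{primitive}, and collide the conclusion of Proposition~\ref{mincount}(9) ($A\cap N$ a $p'$-group, $|A|_p\le q$) with the $p$-part of $|G:X|$ or $|G:Y|$, falling back on 1(7) (no soluble subgroup of order divisible by both primitive primes) or on the abelianness of $A_\pi$ for the $PSp_2(q)\otimes PSp_m(q)$ subcase. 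That part of the proposal is sound and matches the paper.

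The genuine gap is in your treatment of $N=P\Omega_8^+(q)$. First, you overestimate the damage done by triality: the exception in Lemma~\ref{primitive} only affects the supplementary clause of 1(9)(c) (whether $C_X(a)$, hence $A$, is a $p'$-group); the main conclusion of 1(9)(c) requires only that $C_{X\cap N}(a)$ be an abelian $p'$-group, and that is unaffected, so the torus machinery does not collapse here. Second, and more importantly, your substitute argument --- that ``any candidate $A\cap N$ lands inside a subgroup whose Hall $\pi$-part is abelian or a subfield group too small'' --- is asserted rather than derived, and it is not clear how it would dispose of, say, the parabolic candidates $P_1,P_3,P_4$ in \cite[Table 4]{LPS}. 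The paper's actual pivot is an order argument across all of Table 4 showing that $q^4-1$ divides both $|N\cap A|$ and $|N\cap B|$; since $B$ is a $\pi'$-group this forces $s=q_4\in\pi'$, while for every admissible $X\cap N$ containing $A\cap N$ (except $(PSp_2(q)\times PSp_4(q)).2$, handled by abelianness of $A_\pi$ against 1(5)) the centralizer of an order-$s$ element lies in an abelian $p'$-torus, so 1(9) forces $s\in\pi$ --- a direct contradiction. Without that observation, or something equally concrete, your $m=4$ case does not close.
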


\begin{proof} Assume first that  $N = P\Omega_{2m}^+(q)$, for $ m>4$, and consider the factorizations appearing in \cite[Table 1]{LPS}.

\medskip

\noindent \textbf{Case }$X\cap N=N_1$. Here $|G:X|=\frac{1}{(2,q-1)}q^{m-1}(q^m-1).$ Let $d=(4, q^m-1)$.

Suppose first  that $A\leq X$ and $B\leq Y$. We distinguish the different possibilities for $Y_{0} \unlhd Y \cap N$.

$Y \cap N=P_m$ or $Y \cap N=P_{m-1}$ (stabilizers of totally singular $m$-subspaces). Then $|G:Y|=\prod_{i=1}^{m-1}(q^i+1)$ (see  \cite[3.6.1]{LPS}). Moreover $|G:Y|$ divides $|A|$,  and $r=q_{2m-2}\in \pi(A)$ (this prime exists since $m>4$).  There exists an element $a \in A \cap N$ of order $r$ such that $C_{N}(A)$ is contained in a torus of order  $(1/d)(q^{m-1}+1)(q+1)$. By 1(9), $r \in \pi$ and   $A \cap N$ is a $p'$-group. If $(q,m)\ne (2,5)$, then there exists a prime $s=q_{2m-4}\in \pi(A)$. Then $N$ must have a soluble subgroup of order divisible by $r$ and $s$, which is not the case by  \cite[Lemma 2.8]{ACK}.  If $(q,m)=(2,5)$, then $|G:Y|=3^3\cdot 5\cdot 17$. But $N$ has no subgroups of order $5\cdot 17$, a contradiction.

 $ Y \cap N=\hat{}GL_m(q).2$.  Note that $G \geq N.2$, when $m$ is odd. Since $|G:Y|$ divides $|A|$, we have $r=q_{2m-2} \in \pi(A)$ again.  If $a \in A \cap N$ is an element  of order $r$, then $C_{N}(a)$  is contained in a torus of order $(1/d)(q^{m-1}+1)(q+1))$. Hence $|A|_p \leq q$, by 1(9). But this is not the case, since $|G:Y|_p >q $.

 $Y \cap N=\hat{}GU_m(q).2$, $m$ even. In this case $s=q_{m-1}\in \pi(A)$ and the centralizer in $N$ of an element $a \in A \cap N$ of order $s$ is  contained in a torus of order $(1/d)(q^{m-1}-1)(q-1)$. Hence, by 1(9), it holds $|A|_p \leq q$, which is not possible since $|G:Y|$ divides $|A|$.

 $Y \cap N =\Omega_m^+(4). 2^2, q=2$, $m=2k$ even. Then $r=q_{2m-2}\in \pi(A), s=q_{2m-4} \in \pi(A)$. Arguing as in the previous subcases, we deduce that $r \in \pi$. But $N$ does not contain any soluble subgroup whose order is divisible by $r$ and $s$ by  \cite[Lemma 2.8]{ACK}, so we obtain a contradiction with 1(7).

 $Y \cap N = \Omega_m^+(16). 2^2, q=4$, $m$ even. Then again $r=q_{2m-2}\in\pi(A)$, $s=q_{2m-4} \in \pi(A)$ and we obtain a contradiction as  above.

 $Y_{0}=PSp_2(q)\otimes PSp_m(q)$, where $m=2k$ even, $q>2$. Here $Y_{0}$ has index $1$ or $2$ in $Y \cap N$. In this case also $r=q_{2m-2}\in \pi(A)$. If  $a \in A \cap N$ is an element of order $r$, then $C_{N}(A)$ is contained in a torus $T_1$ of order $(1/d)(q^{m-1}+1)(q+1))$. By 1(9), $r \in \pi$ and $|A|_p \leq q$, which is not case, since $|G:Y|$ divides $|A|$.

\smallskip

Now we may assume  that  $A \leq Y$, $B\leq X$ (for the case $X \cap N=N_1$ under consideration). This implies that $|G: X|=(1/(2,q-1))q^{m-1}(q^m-1)$ divides $|A|$.

If $m$ is odd, we consider  $s=q_m \in \pi(A)$. In this case $C_N(a)$ for $a \in A \cap N$ of order $s$ is contained in a torus of order $(1/d)(q^m -1)$. By 1(9), it should be $|A|_p \leq q$, which is not the case.

We distinguish now the different possibilities for $Y_{0} \unlhd Y \cap N$ when $m$ is even.

$Y \cap N=P_m$ or $P_{m-1}$. In this case we have that $s=q_m \in \pi(A)$ and the centralizer in $Y \cap N$ of an element $a \in A \cap N$ of order $s$  is contained in an abelian subgroup of $Y \cap N$ of order $q^{m}-1$. Hence, by 1(9), $|A|_p \leq q$, which is a contradiction.

$Y \cap N=\hat{} GL_m(q).2$, $m$ even. We can argue as in the previous case, since  $s=q_m \in \pi(A)$ and the centralizer in $Y \cap N$ of an element $a \in A \cap N$ of order $s$  is again an abelian $p'$-group.

$Y \cap N=\hat{} GU_m(q).2$, $m=2k$ even. We can choose a primitive prime divisor $u=q_k \in\pi(A)$ such that the centralizer of $a \in A \cap N$ of order $u$ in $Y \cap N$ is contained in a torus of order $(q^k+1)/(q+1, m)(q+1)$, and argue as above.

 $Y_{0}=PSp_2(q)\otimes PSp_m(q)$, $m$ even, $q > 2$.   If $a \in A \cap N$ is an element of order $r=q_m$, then its centralizer in $Y \cap N$ is  contained in $T \times L_2(q)$, where $T$ is an abelian $p'$-group  (recall $PSp_2(q) \cong L_2(q)$). If $r \in \pi$, then $|A \cap N|_p \leq q$, which gives a contradiction since $q^{m-1}$ divides $|A|$ and $m > 4$. Now, if $r \in \pi'$, then $A_{\pi}$ is a Hall $\pi$-subgroup of $T \times L_2(q)$, so it should be abelian, a contradiction.

$Y \cap N =\Omega_m^+(q^2). 2^2, q \in\{2, 4\}$, $m=2k$. In theses cases we consider the prime $t=q_{k} \in \pi(A)$ and  a torus in $Y \cap N$ of order dividing $(q^k-1)$ which is an abelian $2'$-group. We get a contradiction by 1(9).
\medskip

\noindent \textbf{Case $ X \cap N= P_1$}. Here $Y \cap N=\hat{}GU_m(q).2$, $m$ even.

If $A \leq X$ and $B \leq Y$,  we argue as in the case when $X \cap N =N_1$ and $Y \cap N =\hat{}GU_m(q).2$, $m$ even.

If $A\leq Y, B\leq X$, then $|G:X|$ divides $|A|$. Therefore $r=q_{2m-2}$, $t=q_{m}\in \pi(A)$. The centralizer of an element $a \in A \cap N$ of order $t$ in $Y \cap N$ is contained in an abelian subgroup of order $q^{m}-1$. Hence, by 1(9), $t \in \pi$. But  there is no soluble subgroup in $N$ of order divisible by $r$ and $t$, by \cite[Lemma 2.8]{ACK}, so we obtain a contradiction with 1(7).

\medskip

\noindent \textbf{Case $X \cap N =N_2^{-}$}. Here $|G:X|=(1/2)q^{2m-2}(q^m-1)(q^{m-1}-1)/(q+1)$, and  $(X/Z(X))'\cong P\Omega^{-}_{2m-2}(q)$.

First assume $A \leq X$, $B \leq Y$.

Let $Y \cap N =\hat{}GL_m(q).2$, $q\in\{2,4\}$. For $q=2$, $G \geq N.2$ if $m$ is odd, and for $q=4$,  $G \geq N.2$ and $G \neq O_{2m}^{+}(4)$.  Since $|G:Y|$ divides $|A|$, it holds $r=q_{2m-2} \in \pi(A).$ The centralizer in $N$ of an element of order $r$ in $N\cap A$ is an abelian $2'$-group. Hence, by 1(9), it holds that $|A|_2 \leq q$, which is not the case by order arguments.

Assume now that $Y \cap N=P_m$ or $P_{m-1}$. Since $|G:Y|=\prod_{i=1}^{m-1}(q^i+1)$, it holds that  $ r=q_{2m-2}, s=q_{2m-4} \in\pi(A)$ ($s$ exists when $(q,m)\neq(2, 5)$). Again the centralizer in $N$ of an element of order $r$ in $N\cap A$ is an abelian $p'$-group, so $r \in \pi$ by 1(9). But then by 1(7) $N$ should have a soluble subgroup of order divisible by $r$ and $s$, which is not the case (see \cite[Lemma 2.8]{ACK}). The case  $(q,m)=(2, 5)$ is excluded as above in the Case $X\cap N=N_1$.

\smallskip
Now we may suppose $A \leq Y$, $B \leq X$.

Consider $s= q_{m-1}$ when $m$ is even, and $s=q_m$ when $m$ is odd. In any case, $s\in \pi(A)$, since $|G:X|$ divides $|A|$.

For  any choice of $Y \cap N$, we can find a torus $T$ in $N$, which is an abelian $p'$-group, and   $C_N(a)\leq T$ for an element $a$ of order  $s$ in $A \cap N$ (see Lemma~\ref{primitive}). By 1(9), $s \in \pi$ and $|A|_p \leq q$, a contradiction since $|G:X|$ divides $|A|$.

%

\medskip

\noindent \textbf{Case $ X \cap N= N_2^+$}. In this case $Y \cap N=\hat{}GU_m(4), q=4, m$ even, and $G=N.2$. Here we have $ |G:X|=\frac{1}{2}q^{2m-2}(q^{m}-1)(q^{m-1}+1)/(q-1)$, by \cite[(3.6.3 c)]{LPS}.

If $A\leq X$, we can take $q_{m-1}, q_{2m-4}\in \pi(A)$, and we can argue as in previous cases to get a contradiction with 1(7).

Assume now $A \leq Y$, so $|G:X|$ divides $|A|$. In this case,  $r=q_{2m-2}\in \pi(A)$ and we can follow previous arguments to get a contradiction with 1(9).

\medskip

Next we consider some extra factorisations appearing in \cite[Tables 2, 3]{LPS}.

Let $N = P\Omega_{16}^+(q)$, $X \cap N=\Omega_{9}(q).a$, $a \leq 2$, $Y \cap N=N_{1}=\Omega_{15}(q)$. If $A \leq X$,  we consider $r=q_{14} \in \pi(A)$. If $A \leq Y$ we consider $s=q_{8} \in \pi(A)$ and a torus in $Y \cap N$ containing the centralizer of an element of order $s$ in $A \cap N$. The contradiction arises in both  cases applying 1(9), because in any case $|A|_p > q$.

\smallskip

 Let $N=\Omega_{24}^+(2), X\cap N=N_1=\Omega_{23}(2), Y\cap N=Co_1$. If $A\leq X$, then we consider $r=q_{22} \in \pi (A)$, to get a contradiction as in previous cases by 1(9). If $A \leq Y$, since the only Hall $\pi$-subgroups of $Co_1$ with $|\pi| \geq 2$ appear for $\pi=\{11, 23\}$ and a Sylow $23$-subgroup in this group is self-centralizing, we get a contradiction by 1(6).

\medskip
Consider now the case $m=4$, i.e. $N =P\Omega_8^+(q)$ (see Table 4 in \cite{LPS}).

By 1(8), we can assume that $q \neq 2$, since $|\pi(P\Omega_8^+(2))|=4$.
First note that, by order arguments and by checking all possibilities for $X\cap N$ and $Y \cap N$ in \cite[Table 4]{LPS}, it holds that $q^4-1$ should divide $(|N \cap A|, |N \cap B|)$. This means in particular that $s=q_4 \in \pi(q^4-1) \subseteq \pi'$. (Note that when $q=3$, then $q_4=5$.)

In any of the cases for $X  \cap N$ and $Y \cap N$ containing $A \cap N$, except the one below, it holds that there exists  a torus of order divisible by $s$ and an element $a \in A \cap N$ of order $s$
whose centralizer in $A \cap N$ is contained such torus, which is an abelian $p'$-group. But this means by 1(9) that $s \in \pi$, which is a contradiction.

In the particular case $ A \cap N \leq Y \cap N =(PSp_2(q) \times PSp_4(q)).2$, since $s \in \pi'$, it holds that $A_{\pi}$ should be a Hall $\pi$-subgroup of  $C_{Y \cap N}(a)$, with $a\in A\cap N$ of order $s$, and so $A_\pi$ should be an abelian group, because $\pi(PSp_2(q)) \subseteq \pi'$, which is again a contradiction.

There is an exceptional factorization when $N=P\Omega_8^+(3)$, $X \cap N=2^6.A_8$, $Y \cap N \in \{P_1, P_3, P_4\}$, $G \geq N.2$. Since the alternating group does not have Hall $\pi$-subgroups for $|\pi| \geq 2$, it holds that $A \leq Y$. But then $q_4=5 \in \pi(A)$ and the previous argument holds also in this case, which concludes the proof of the lemma.
\end{proof}

The Main Theorem  is now  proved.

\bigskip

\noindent
{\bf Acknowledgments.} Research supported by Proyectos PROMETEO/2017/ 057 from the Generalitat Valenciana (Valencian Community, Spain), and PGC2018-096872-B-I00 from the Ministerio de Ciencia, Innovaci\'on y Universidades, Spain, and FEDER, European
Union; and first author also by Project VIP-008 of Yaroslavl P. Demidov State University.

\bigskip

\noindent
 \footnotesize{L. S. KAZARIN}\\
\footnotesize{Department of Mathematics, Yaroslavl P. Demidov State University}\\
 \footnotesize{Sovetskaya Str 14, 150014 Yaroslavl, Russia}\\
 \footnotesize{E-mail: Kazarin@uniyar.ac.ru}\\
 \\
 \footnotesize{A. MART\'{I}NEZ-PASTOR}\\
 \footnotesize{Instituto Universitario de Matem\'{a}tica Pura y  Aplicada IUMPA}\\
\footnotesize{Universitat Polit\`{e}cnica de Val\`{e}ncia,
 Camino de Vera, s/n,  46022 Valencia, Spain}\\
 \footnotesize{E-mail: anamarti@mat.upv.es}\\
 \\
 \footnotesize{and M.~D. P\'{E}REZ-RAMOS}\\
\footnotesize{Departament de Matem\`{a}tiques, Universitat de Val\`{e}ncia,}\\
 \footnotesize{C/ Doctor Moliner 50, 46100 Burjassot
({Val\`{e}ncia}), Spain}\\
\footnotesize{E-mail: Dolores.Perez@uv.es}

\end{document}